\documentclass[12pt]{article}

\usepackage[utf8]{inputenc} 
\usepackage[margin=1.in]{geometry} 
\geometry{a4paper} 
\usepackage{graphicx} 
\usepackage[dvipsnames]{xcolor}
\usepackage{booktabs} 
\usepackage{array} 
\usepackage{verbatim} 
\usepackage[caption=false]{subfig}
\usepackage{amsthm,amsmath,amssymb}
\usepackage[english]{babel}
\usepackage{graphicx}
\usepackage[small,bf,hang]{caption} 
\usepackage{hyperref}

\newtheorem{theorem}{Theorem $\!\!$}[section]
\newtheorem{lemma}[theorem]{Lemma $\!\!$}
\newtheorem{remark}[theorem]{Remark $\!\!$}
\newtheorem{corollary}[theorem]{Corollary $\!\!$}



\begin{document}

\title{The Minimality of the Georges--Kelmans Graph}


\author{Gunnar Brinkmann\\
Department of Applied Mathematics, Computer Science \& Statistics,\\
  Ghent University, 9000 Ghent, Belgium\\
  Gunnar.Brinkmann@ugent.be\\
  \and
 Jan Goedgebeur\footnote{Supported by a Postdoctoral Fellowship of the Research Foundation Flanders (FWO).}\\Department of Applied Mathematics, Computer Science \& Statistics,\\
  Ghent University, 9000 Ghent, Belgium\\ and \\ Computer Science Department,\\
  University of Mons, 7000 Mons, Belgium\\ and \\ 
  Department of Computer Science,\\
  KU Leuven Campus Kulak, 8500 Kortrijk, Belgium \\
  Jan.Goedgebeur@kuleuven.be\\
  \and
  Brendan D.\ McKay\footnote{Supported by a Francqui International Professorship.}\\Department of Computing,\\
  Australian National University, Canberra ACT 2601, Australia\\ Brendan.McKay@anu.edu.au}
  

\maketitle

{\bf keywords: }  hamiltonian cycle, bipartite, cubic, connectivity, Tutte's conjecture, Barnette's conjecture, genus

{\bf MSC:} 05C45, 05C85, 05C10

\begin{abstract}

In 1971, Tutte wrote in an article that {\em it is tempting to conjecture that
 every 3-connected bipartite cubic graph is hamiltonian}. Motivated by this remark,
Horton constructed a counterexample on $96$ vertices.
In a sequence of articles by different authors several smaller counterexamples 
were presented. The smallest of these graphs is a graph on 50 vertices which was discovered
independently by Georges and Kelmans. In this article we show that there is no smaller 
counterexample. As all non-hamiltonian 3-connected bipartite cubic graphs in the
literature have cyclic 4-cuts---even if they have girth 6---it is natural to ask whether this
is a necessary prerequisite. In this article we answer this question in the negative and give
a construction of an infinite family of non-hamiltonian cyclically 5-connected  
bipartite cubic graphs.

In 1969 Barnette gave a weaker version of the conjecture stating that
 3-connected planar bipartite cubic graphs are hamiltonian. We show that Barnette's
conjecture is true up to at least 90 vertices.
We also report that a search of small non-hamiltonian 3\nobreakdash-connected bipartite cubic
graphs did not find any with genus less than~4.
\end{abstract}

\section{Introduction}

In this article all graphs will be simple and undirected unless
explicitly stated otherwise. For any standard graph theoretical
concepts not explicitly defined here, please refer
to~\cite{graph_theory_diestel}.

Tait conjectured in 1880 that every 3-connected planar cubic graph is
hamiltonian. This conjecture was disproved in 1946 by Tutte~\cite{tutte1946hamiltonian}, who
constructed a counterexample on 46 vertices. If Tait's conjecture had
been true, it would have implied the Four Colour Theorem. In the years
that followed, several researchers constructed smaller
counterexamples. In 1988, Holton and McKay~\cite{holton1988smallest}
completely settled the problem by showing that the smallest
non-hamiltonian 3-connected planar cubic graphs have 38 vertices and
that there are exactly 6 such graphs of that order (one of which is
the famous Barnette--Bosák--Lederberg graph).

For any graph $H$, $V(G)$ is its vertex set and $E(G)$ is its edge set.
A graph is \emph{cyclically $k$-edge-connected} if the
deletion of fewer than $k$ edges does not create two components both
of which contain at least one cycle.
Similarly, for  \emph{cyclically $k$-vertex-connected}.
With the exception of a few
graphs on 6 or fewer vertices, the cyclic edge-connectivity
and cyclic vertex-connectivity of a connected
cubic graph both equal the size of the smallest cut of 
independent edges~\cite{mccuaig}.
(Edges are \emph{independent} if they have no endpoints in common.)
Consequently we will call this number just \emph{cyclic connectivity}.
With the same small exceptions, the complement of a shortest cycle
has too many edges to be acyclic, so the cyclic connectivity is at most equal to the girth.
Also note that cyclic 3-connectivity implies 3-connectivity.

In 1971, Tutte~\cite{tutte19712} wrote that {\em it is tempting to conjecture that
 every 3-connected \textit{bipartite} cubic graph is hamiltonian}.
This statement that is often cited as a conjecture was disproved in 1976 by
Horton~\cite[p.\ 240]{bondy1976graph}, who constructed a counterexample
on 96 vertices. In 1982 he found a smaller counterexample, on 92
vertices~\cite{horton1982two}. Later Ellingham~\cite{ellinghamthesis}
discovered an infinite family of non-hamiltonian 3-connected
bipartite cubic graphs. The smallest member of his family has 78
vertices.

All of the examples mentioned so far have cyclic connectivity~3.
In~1983, Ellingham and Horton~\cite{ellingham1983non}
published a non-hamiltonian cyclically 4-connected bipartite cubic graph.
Their 54-vertex graph can be seen in Figure~\ref{fig:EHGK}(a).

Finally, Georges~\cite{georges1989non} and
Kelmans~\cite{kel1988cubic,kelmans1994constructions} independently
discovered another infinite family of non-hamiltonian cyclically 4-connected
bipartite cubic graphs, the smallest of which has 50 vertices.  
Their 1986 submission dates were only 15 days apart.
This graph is shown in Figure~\ref{fig:EHGK}(b) and is the smallest known.

In~\cite{kel1988cubic} it is written that Lomonosov and Kelmans proved
without computer assistance that Tutte's conjecture holds up to 30 vertices,
but no reference is given. Recently, Knauer and Valicov~\cite{knauer2019cuts} verified Tutte's conjecture up to 40 vertices using computational methods.
Our aim in this paper is to prove that the Georges--Kelmans graph is in fact minimal.

\begin{theorem}\label{thm:main}
There is no smaller non-hamiltonian 3-connected bipartite cubic graph than the 
Georges--Kelmans graph (which has 50 vertices).
Moreover if there is another example with 50 vertices, it has girth and
cyclic connectivity exactly~6.
\end{theorem}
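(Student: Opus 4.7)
My plan is exhaustive computer enumeration, organized as a case analysis by girth $g$ and cyclic connectivity $c$. A bipartite cubic graph on $n$ vertices has $n$ even, girth $g$ even with $g\ge 4$, and (when 3-connected) cyclic connectivity $c$ at least $3$ and at most $g$. I would enumerate, up to isomorphism, every 3-connected bipartite cubic graph on $n$ vertices for each even $n\le 50$ and test each for hamiltonicity. To establish the theorem, every such graph on $n<50$ vertices must turn out to be hamiltonian, and for $n=50$ the only non-hamiltonian graph lying outside the stratum $g=c=6$ must be the Georges--Kelmans graph itself.

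A blind enumeration of all $3$-connected bipartite cubic graphs on $50$ vertices is infeasible, so two ingredients are essential: (i) generators that enforce the girth and cyclic-connectivity restrictions during construction rather than by post-hoc filtering, and (ii) a reduction for small-cyclic-connectivity cases. When $c\in\{3,4\}$, a small cut decomposes a potential counterexample into two smaller bipartite cubic fragments meeting across a narrow interface, and hamiltonicity of the whole becomes a combinatorial predicate on the two fragments that depends only on which interface-path patterns each admits. Tabulating this predicate once on the (far fewer) bipartite cubic graphs up to roughly half of $50$ vertices then rules out all $c=3$ and $c=4$ candidates at $n=50$ without ever generating them explicitly. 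Girth-$4$ graphs contain $4$-cycles which typically induce $4$-cuts and so mostly fall under the same reduction. What remains is direct enumeration, for each even $n\le 50$, of the cyclically $5$- and $6$-connected bipartite cubic graphs; canonical-construction-path generators in the plantri/snarkhunter tradition keep these counts tractable, and hamiltonicity of each surviving graph is settled by standard backtracking, which is very fast on cubic graphs. At $n=50$, a final sweep then classifies the outcomes by $(g,c)$ and verifies that only $(g,c)=(6,4)$ yields a non-hamiltonian graph, which is Georges--Kelmans itself, so any other $50$-vertex counterexample must lie in the $(g,c)=(6,6)$ bucket.

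The principal obstacle is purely computational: the generator must be simultaneously fast, provably correct, and capable of combining the bipartite, 3-connectivity, cyclic-connectivity, and girth constraints without isomorphic duplication or wasted work across the $(g,c)$ strata. The explosive growth of cubic graphs near $n=50$ makes memory and time budgets tight, and the structural reductions in the small-connectivity cases have many bookkeeping subtleties (matching interface-path types across the two sides, accounting for the automorphisms of fragments, and ensuring the glued graph is actually 3-connected). Since the final result rests entirely on an exhaustive search, I would insist on running two independently written generators side by side, confirming the hamiltonicity status of every surviving graph with two different hamiltonicity testers, and cross-validating counts at every order and every $(g,c)$ case before drawing the conclusion.
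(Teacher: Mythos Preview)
Your overall architecture matches the paper's: split by cyclic connectivity, and for $c\in\{3,4\}$ decompose at a small cut into two bipartite cubic fragments whose hamiltonian ``interface types'' determine hamiltonicity of the whole.  Where your plan breaks down is the sentence ``What remains is direct enumeration \ldots\ of the cyclically $5$- and $6$-connected bipartite cubic graphs; canonical-construction-path generators in the plantri/snarkhunter tradition keep these counts tractable.''  The paper addresses exactly this option and rejects it: after the $c\le 4$ reductions one is still facing an estimated $4.5\times 10^{15}$ bipartite cubic girth-$6$ graphs on at most $48$ vertices, roughly $3{,}500$ CPU-years at the generation rates actually achievable, and the authors explicitly note that they know of no fast generator that imposes cyclic connectivity ${\ge}5$ during construction.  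So your bottleneck is not ``tight'' but off by two orders of magnitude, and no existing generator closes that gap.

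What the paper does instead are two further structural reductions that your plan is missing.  For $c=5$ it cuts at a $5$-edge cut into ``$5$-pieces'', attaches a fixed family of twelve auxiliary test graphs to each piece, and shows that unless both pieces fall into a very sparse exceptional class (Class~$1\setminus$ Class~$2$, at most a few million pieces up to $35$ vertices) the original graph is forced to be hamiltonian; only pairs from the exceptional class need to be glued and checked.  For $c=6$ with girth $6$ the paper introduces a bespoke $8$-cycle reduction: a specific $16$-vertex configuration is collapsed, sending a girth-$6$ graph on $n$ vertices to a bipartite cubic graph on $n-8$ vertices together with a $4$-tuple of marked edges, and one tests for an ``extendable'' hamiltonian cycle there.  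Irreducible graphs are generated separately by pruning \emph{minibaum} whenever the forbidden configuration appears.  These two ideas are what bring the computation down from thousands of CPU-years to about a hundred; without them your exhaustive search does not terminate.
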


%

\begin{figure}[!htb]
\unitlength=\textwidth
\begin{picture}(1,0.36)(0,0)
 \put(0.015,0.05){\includegraphics[width=0.46\textwidth]{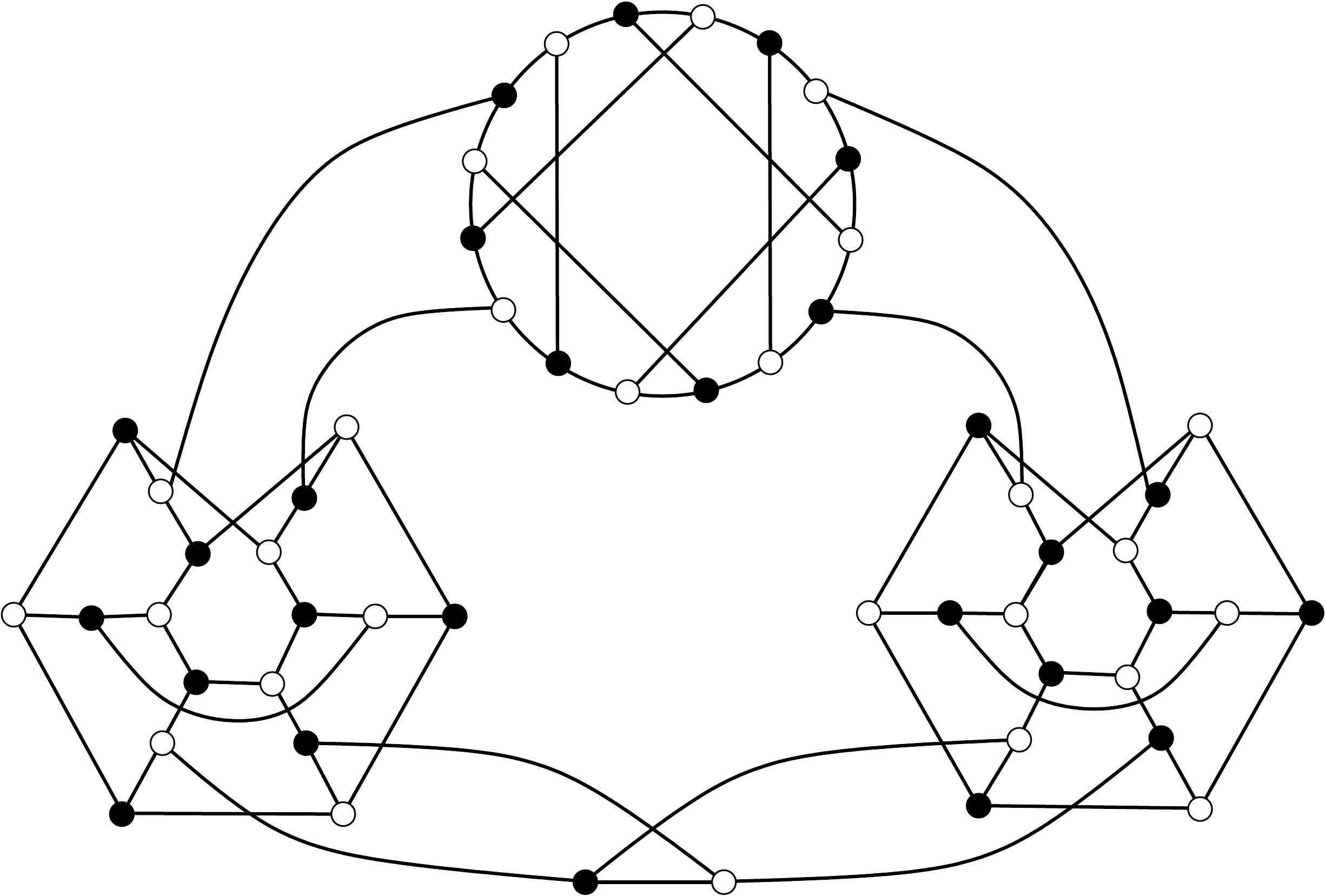}}
 \put(0.505,0.11){\includegraphics[width=0.49\textwidth]{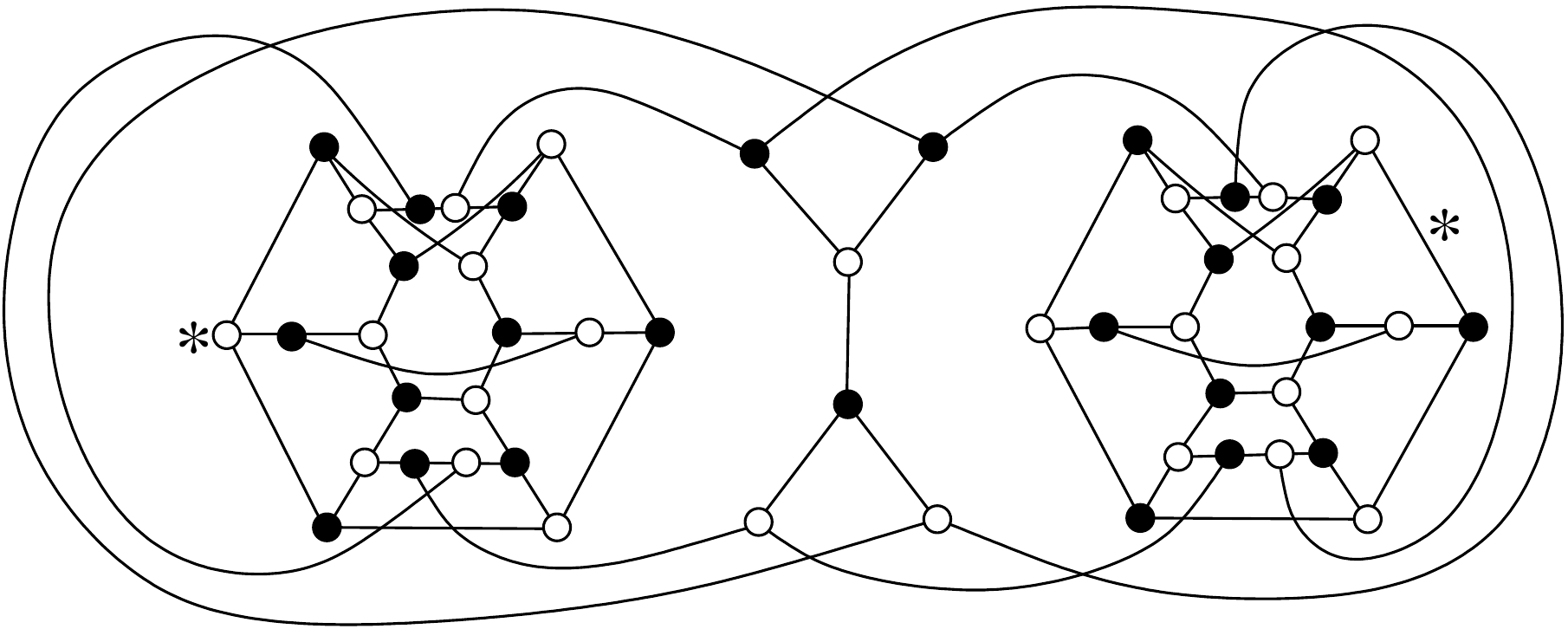}}
 \put(0.225,0.005){(a)}\put(0.75,0.006){(b)}
\end{picture}
\caption{The Ellingham--Horton graph on 54 vertices (a) and the Georges--Kelmans graph on 50 vertices (b).
(The asterisks are used in Section~\ref{sect:cyc5}.)
\label{fig:EHGK}}
\end{figure}

Since we plan on liberal use of the computer, the reader may wonder why we
do not just test all the 3-connected bipartite cubic graphs up to 48 vertices.
The reason is that there are far too many of them.  Based on exact counts for smaller
sizes (Table~\ref{table:cubic_bip_graphs} below; see~\cite{HoG} for some
of the graphs) we estimate that there
are about $5\times 10^{17}$ cubic bipartite graphs with 48 vertices, most of
them 3-connected. 
With our programs able to generate and test about
50,000 bipartite cubic graphs per second, this approach
would take about 320,000 CPU years.

\begin{table}[ht!b]
\centering
\begin{tabular}{c | c  c  c}
Vertices & Girth at least 4 & Girth at least 6 & Girth at least 8\\
		\midrule
6 & 1 &  &  \\
8 & 1 &  &  \\
10 & 2 &  &  \\
12 & 5 &  &  \\
14 & 13 & 1 &  \\
16 & 38 & 1 &  \\
18 & 149 & 3 &  \\
20 & 703 & 10 &  \\
22 & 4\,132 & 28 &  \\
24 & 29\,579 & 162 &  \\
26 & 245\,627 & 1\,201 &  \\
28 & 2\,291\,589 & 11\,415 &  \\
30 & 23\,466\,857 & 125\,571 & 1 \\
32 & 259\,974\,248 & 1\,514\,489 & 0 \\
34 & 3\,087\,698\,618 & 19\,503\,476 & 1 \\
36 & 39\,075\,020\,582 & 265\,448\,847 & 3 \\
38 & 524\,492\,748\,500 & 3\,799\,509\,760 & 10 \\
40 & ~7\,439\,833\,931\,266~ & 57\,039\,155\,060 & 101 \\
42 & ? & 896\,293\,917\,129 & 2\,510 \\
44 & ? & ? & 79\,605 \\
46 & ? & ? & 2\,607\,595 \\
48 & ? & ? & 81\,716\,416 \\
50 & ? & ? & 2\,472\,710\,752 \\
\end{tabular}
\caption{Counts of connected bipartite cubic graphs with girth at least 4, 6 or~8.}

\label{table:cubic_bip_graphs}
\end{table}

We will instead take an incremental approach that combines theory
and computation, successively eliminating the possibilities that a
non-hamiltonian 3-connected bipartite cubic graph smaller than
the Georges--Kelmans graph is cyclically 3-connected, cyclically
4-connected, cyclically 5-connected, and finally show that it doesn't exist.
The required amount of computer time was large but available.

%

Furthermore, noting that all previous examples had cyclic connectivity
at most 4, we construct an infinite family of non-hamiltonian
cyclically 5-connected bipartite cubic graphs, of which the
smallest has 300 vertices.

Another important strongly related conjecture is Barnette's
conjecture, which is a weakened combination of Tait and Tutte's
conjectures. Barnette~\cite{barnette1969conjecture} conjectured in
1969 that every 3-connected \textit{planar} bipartite cubic graph is
hamiltonian. Even after 50 years, this conjecture is still open. 
Holton, Manvel and McKay~\cite{holton1985hamiltonian} showed
that Barnette's conjecture is true up to at least 64 vertices and this
was later improved to 84 vertices by Aldred, Brinkmann and
McKay~\cite{announcementbarnette}. We will show that Barnette's
conjecture is true up to at least 90 vertices.
We also did a non-exhaustive search for non-hamiltonian 3-connected
bipartite cubic graphs that are close to planar in the sense of having
low genus.  However we did not find any with genus less than~4.

The article is organised as follows. In Section~\ref{sect:properties}
we give the theoretical and computational results necessary to prove
Theorem~\ref{thm:main}.  
We verified the correctness of each of our computational results by making an independent implementation of every program which was specifically developed for this project (often using an alternative algorithm) and using it to replicate each computational result
as far as CPU time limits allowed.
These correctness tests are described in Section~\ref{sect:properties} as well.
In Section~\ref{sect:cyc5} we present an
infinite family of non-hamiltonian cyclically 5-connected
bipartite cubic graphs.  In Section~\ref{sect:barnette_genus} we
verify Barnette's conjecture up to 90 vertices and
give the results about the generation of the non-hamiltonian 3-connected bipartite cubic
graphs constructed in this project.  
As this project required the discovery of more than $10^{14}$ hamiltonian
cycles in cubic graphs of up to 50 vertices, we needed a very efficient practical algorithm for this. We describe this algorithm in Section~\ref{sect:cubhamg}.


\section{Properties of a minimal non-hamiltonian bipartite cubic 3-connected graph}
\label{sect:properties}

\begin{lemma}\label{lem:parity}
  In a bipartite graph with vertices of degree 2 and 3,
  the number of vertices of degree~2 of each colour (that is: of each bipartition class)
  are equal modulo~$3$.
\end{lemma}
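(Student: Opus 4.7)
The plan is to use a standard double-counting argument on the edges, exploiting the fact that in a bipartite graph every edge has exactly one endpoint in each colour class, so that the sum of degrees on one side equals the sum of degrees on the other side.

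Concretely, let $(A,B)$ be the bipartition, and denote by $a_2, a_3$ the number of vertices of degree $2$ and $3$ in $A$, and by $b_2, b_3$ the corresponding quantities in $B$. Counting edges from the $A$-side gives $|E| = 2a_2 + 3a_3$, and counting from the $B$-side gives $|E| = 2b_2 + 3b_3$. Equating yields
\[
2(a_2 - b_2) = 3(b_3 - a_3).
\]
The right-hand side is divisible by $3$, and since $\gcd(2,3)=1$, we conclude $3 \mid (a_2 - b_2)$, i.e.\ $a_2 \equiv b_2 \pmod 3$, which is exactly the claim.

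There is essentially no obstacle here; the only thing worth double-checking is that the argument uses neither connectedness nor any assumption beyond the degree bound and bipartiteness, so the lemma applies to any such graph (including the multi-component situations that may arise in later applications within the paper).
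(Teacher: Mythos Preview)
Your proof is correct and follows essentially the same double-counting argument as the paper: both count the edges from each side of the bipartition to obtain $2a_2+3a_3=2b_2+3b_3$ and then reduce modulo~$3$. Your version spells out the coprimality step a bit more explicitly, but the approach is identical.
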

\begin{proof}
 Define $n_{d,c}$ to be the
 number of vertices of degree $d$ and colour~$c$, for
 $d=2,3$ and $c=0,1$.  
 Counting the edges in two different ways, we have
 $3n_{3,0}+2n_{2,0}=3n_{3,1}+2n_{2,1}$, from which it follows that $n_{2,0}$
 and $n_{2,1}$ are equal modulo~3.
\end{proof}

\begin{lemma}\label{lem:cyc3}
 A minimal non-hamiltonian 3-connected bipartite cubic graph is
 cyclically 4-connected.
\end{lemma}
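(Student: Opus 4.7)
The plan is to argue by contradiction. Suppose $G$ is a minimal non-hamiltonian 3-connected bipartite cubic graph whose cyclic connectivity is exactly $3$, and fix a cyclic 3-edge-cut $T=\{e_1,e_2,e_3\}$ with $e_i=u_iv_i$ separating $G$ into sides $A$ and $B$, each containing a cycle, with $u_i\in A$ and $v_i\in B$. The independence of $T$ implies that $u_1,u_2,u_3$ are distinct, as are $v_1,v_2,v_3$. The only degree-$2$ vertices of the subgraph $A$ are $u_1,u_2,u_3$, so applying Lemma~\ref{lem:parity} to $A$ forces all three $u_i$ to lie in the same bipartition class of $G$; symmetrically the three $v_i$ all lie in the opposite class.

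I would then form two auxiliary graphs $G_A$ and $G_B$ by adding a new vertex $x$ joined to $u_1,u_2,u_3$ (respectively $y$ joined to $v_1,v_2,v_3$). Because the $u_i$ are monochromatic, $G_A$ is bipartite, and it is cubic by construction; likewise for $G_B$. A short case analysis of potential 2-edge-cuts in $G_A$, using that $G$ is 3-edge-connected and each of $A,B$ is connected, shows that $G_A$ and $G_B$ are 3-edge-connected and hence 3-connected (for cubic graphs edge- and vertex-connectivity coincide at this level). Both have strictly fewer vertices than $G$ (since $|V(A)|,|V(B)|\ge 4$), so by the minimality of $G$ they are both hamiltonian.

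The combinatorial crux is to rule out the possibility that the Hamilton cycles of $G_A$ and $G_B$ use incompatible pairs of edges at $x$ and $y$. Here I would invoke Smith's theorem: in every cubic graph the number of Hamilton cycles through any given edge is even. Let $n_{ij}$ denote the number of Hamilton cycles of $G_A$ containing both $xu_i$ and $xu_j$. Smith's theorem applied to each edge $xu_i$ yields $n_{ij}+n_{ik}\equiv 0\pmod 2$ for $\{i,j,k\}=\{1,2,3\}$, so the three quantities $n_{12},n_{13},n_{23}$ share a common parity; since their sum equals twice the total number of Hamilton cycles of $G_A$ and $G_A$ is hamiltonian, they must all be odd, hence strictly positive. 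Consequently $A$ has a Hamilton path between every pair of $u_i$'s, and by the same reasoning $B$ has a Hamilton path between every pair of $v_i$'s. Concatenating a $u_1$-to-$u_2$ Hamilton path in $A$, the edge $e_2$, a $v_2$-to-$v_1$ Hamilton path in $B$, and the edge $e_1$ yields a Hamilton cycle of $G$, the desired contradiction. The most delicate step is verifying the 3-connectivity of $G_A$ and $G_B$, but the clean combinatorial observation that makes the argument work without any ad hoc gadget construction is the Smith-theorem parity trick, which upgrades the bare existence of one Hamilton cycle to the compatibility needed to splice the two halves together.
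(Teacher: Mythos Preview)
Your reduction to the two auxiliary graphs $G_A,G_B$ matches the paper exactly, but the Smith-theorem step contains a genuine error.  Smith's theorem gives $n_{12}+n_{13}\equiv n_{12}+n_{23}\equiv n_{13}+n_{23}\equiv 0\pmod 2$, hence $n_{12}\equiv n_{13}\equiv n_{23}\pmod 2$; so far so good.  But each Hamilton cycle of $G_A$ uses exactly two of the three edges at $x$ and is therefore counted in exactly one $n_{ij}$, so $n_{12}+n_{13}+n_{23}$ equals the total number of Hamilton cycles, not twice that number.  Nothing forces this total to be odd, and thus nothing forces the common parity of the $n_{ij}$ to be odd.  Concretely, $n_{12}=2$, $n_{13}=n_{23}=0$ is perfectly consistent with Smith's theorem, and indeed the paper records (by computation) that $3$-connected bipartite cubic graphs with an edge lying on no Hamilton cycle exist from $34$ vertices on; for such a graph, with $x$ an endpoint of the forbidden edge, two of the $n_{ij}$ vanish.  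So you cannot conclude that $A$ has Hamilton paths between \emph{every} pair of $u_i$'s, and the splicing argument collapses.

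What the paper does instead is accept that the two sides may fail to match, and analyse what that failure forces: if no common avoidable edge exists, a short pigeonhole shows one of $G_A,G_B$ has an edge at the new vertex lying on no Hamilton cycle (a \emph{forbidden} edge) while the other has an edge lying on every Hamilton cycle (a \emph{forced} edge).  A computer search then establishes that forced edges do not occur below $30$ vertices and forbidden edges not below $34$, so $|V(G)|\ge 30+34-2=62>50$, contradicting the existence of the $50$-vertex Georges--Kelmans graph.  The lemma, as proved in the paper, is therefore not a purely combinatorial statement but one that ultimately rests on these computed thresholds.
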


\begin{proof}
 Suppose that a non-hamiltonian 3-connected bipartite cubic graph $G$
 has an independent 3-edge cut $\{e_1,e_2,e_3\}$.
 Divide $G$ into two parts at the cut. Due to Lemma~\ref{lem:parity},
 in each part the vertices of degree $2$ have the same colour, so
 we can use one extra vertex for each part and connect it to
the three vertices of degree 2 to form two 
bipartite cubic graphs $G_1,G_2$. It is an easy consequence of Menger's theorem that they are both 3-connected. 
For $j=1,2$ we label the new edges $\{e_{1,j},e_{2,j},e_{3,j}\}$ 
so that for $1\le i\le 3$, $e_{i,j}$ is in $G_j$ and has one
endpoint the same as $e_i$.
As the cut was
independent, $G_1$ and $G_2$ are smaller than $G$.
 
 If one of $G_1,G_2$ was non-hamiltonian,
 $G$ would not be minimal, so we may assume that both
$G_1$ and $G_2$ are hamiltonian.

Now assume that one smaller graph, w.l.o.g. $G_1$, has a hamiltonian cycle 
not containing~$e_{i,1}$. If $G_2$ had a hamiltonian 
cycle not containing $e_{i,2}$, these cycles could be combined to form a
hamiltonian cycle of $G$.
If for one smaller graph $G_j$, each edge of $\{e_{1,j},e_{2,j},e_{3,j}\}$ 
could be avoided by a hamiltonian cycle or for both smaller graphs there would be at least
two edges that can be avoided, there would be a combination of hamiltonian cycles where
the same edge would be avoided---so they could be combined to form a hamiltonian cycle
of $G$. So in one smaller graph at most one edge can be avoided (which means that this
{\em forbidden} edge is in no hamiltonian cycle) and in the other at most two edges can be avoided
(which means that there is a {\em forced} edge that lies in each hamiltonian cycle).
 
 A straightforward
 computation using the programs {\em minibaum} and {\em cubhamg}
showed that forced edges first appear at 30 vertices and forbidden
 edges first appear at 34 vertices.  (\emph{Minibaum}~\cite{brinkmann96} is a generator for cubic graphs which can also generate bipartite cubic graphs efficiently and {\em cubhamg} is described in Section~\ref{sect:cubhamg}.
 The bipartite cubic graphs up to 30 vertices are available at the House of Graphs~\cite{HoG}.)
 Using an independent implementation of a program to test if a graph contains forced or forbidden edges, we obtained exactly the same number of graphs with forced/forbidden edges up to 34 vertices. Therefore, this construction only
 yields non-hamiltonian graphs with at least 62 vertices---larger than
 the Georges--Kelmans graph.
\end{proof}

\begin{lemma}\label{lem:cyc4}
 A minimal non-hamiltonian 3-connected bipartite cubic graph
 cannot have cyclic connectivity~$4$ unless it is the
 Georges--Kelmans graph.
Moreover, a minimal non-hamiltonian 3-connected bipartite cubic graph
has girth at least 6.
\end{lemma}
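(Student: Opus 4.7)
The plan is to start with an independent cyclic $4$-edge-cut $C=\{e_1,e_2,e_3,e_4\}$ separating $G$ into sides $A$ and $B$. Both sides have exactly four vertices of degree~$2$, and applying Lemma~\ref{lem:parity} to each side forces these four to split $2+2$ between the colour classes. Label the $A$-endpoints as $u_1,u_2,u_3,u_4$ with $u_1,u_2$ in one class and $u_3,u_4$ in the other, and correspondingly $v_1,\dots,v_4$ on $B$ with $e_i = u_i v_i$.

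I would then close each side off in every bipartite-preserving way: the two $2$-edge matching closures $\{u_1u_3,u_2u_4\}$ and $\{u_1u_4,u_2u_3\}$, supplemented by small gadget closures on one or two fresh vertices whenever a matching closure would create a multi-edge or fail to be $3$-connected. Each closure is a strictly smaller 3-connected bipartite cubic graph, so by the minimality of $G$ it must be hamiltonian. A hamiltonian cycle of a closure restricts to a spanning collection of one or two paths in the side whose endpoints form a prescribed perfect matching on $\{u_1,u_2,u_3,u_4\}$ (resp.\ $\{v_1,\dots,v_4\}$); the choice of closure controls which matchings can arise. A hamiltonian cycle of $G$ must cross $C$ an even number of times---so twice or four times---and hence is equivalent to a compatible pair of such endpoint matchings on the two sides under the bijection $u_i \leftrightarrow v_i$. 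Aggregating the matchings realised across all closures, one argues that a compatible pair is always forced (contradicting non-hamiltonicity) unless each side lies in a strictly restricted family of obstructed configurations.

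The remaining step is a finite computation: using \emph{minibaum} one would generate all candidate sides (bipartite cubic graphs with four degree-$2$ vertices in the required $2+2$ colour pattern) up to the relevant sizes $|A|,|B|\le 44$, and use \emph{cubhamg} together with its independent replacement to decide realisability of each endpoint matching. Each admissible $A$-side is paired with each admissible $B$-side along the possible bijections of cut edges and the resulting graph tested for hamiltonicity. The main obstacle is keeping the case analysis tractable: there are several closure types, the parity/colour constraints of Lemma~\ref{lem:parity} have to be threaded carefully through every matching test, and the raw number of candidate sides grows very rapidly with $|A|$. The expected outcome is that, once every closure hamiltonicity condition is enforced, the unique assembly that survives is the Georges--Kelmans graph itself.

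For the "moreover" clause, Lemma~\ref{lem:cyc3} gives cyclic $4$-connectivity, so girth is at least~$4$, and bipartiteness already rules out girth $3$ and~$5$; only girth~$4$ needs to be excluded. Suppose $G$ contains a $4$-cycle $C_4$. If two of the four edges leaving $C_4$ shared an endpoint $x$ outside $C_4$, then either $x$ is adjacent to two consecutive vertices of $C_4$, producing a triangle through $x$ (forbidden by bipartiteness), or the five vertices $V(C_4)\cup\{x\}$ are separated from the rest by only three edges, contradicting cyclic $4$-connectivity. Hence the four leaving edges are independent and form a cyclic $4$-cut, forcing the cyclic connectivity of $G$ to be exactly~$4$. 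The first part of the lemma then identifies $G$ as the Georges--Kelmans graph, which has girth~$6$---a contradiction. Therefore $G$ has girth at least~$6$.
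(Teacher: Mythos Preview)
Your outline has the right skeleton (split at a cyclic $4$-cut, close each side to a smaller bipartite cubic graph, invoke minimality, analyse which endpoint-matchings are realised, finish by computer), and your argument for the ``moreover'' clause is correct and essentially what the paper does. But the main part has a decisive gap: \emph{feasibility of the enumeration}.

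The paper does not attempt to generate all sides up to the maximal possible order. Instead, it closes each side by adjoining \emph{two new vertices} joined by a single edge $e_i$ (so $|V(G_1)|+|V(G_2)|=|V(G)|+4\le 54$), and then proves a \emph{lower bound}: it defines ``type~1'' and ``type~2'' edges and shows that if $G$ is non-hamiltonian then $e_1$ (and by symmetry $e_2$) must be of type~1 or type~2 in $G_2$ (resp.\ $G_1$). A short computation shows such edges first occur at $18$ vertices, so $|V(G_i)|\ge 18$, hence the smaller closed side has at most $26$ vertices. Table~\ref{tab:cyc4} then shows there are at most a few hundred candidates for $G_1$, so the pairing step is tiny.

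Your proposal, by contrast, has no analogue of this lower bound. You ask to ``generate all candidate sides \dots\ up to the relevant sizes $|A|,|B|\le 44$'' and only afterwards filter for ``obstructed configurations''. That generation alone is out of reach: bipartite cubic graphs of that order number in the $10^{15}$--$10^{17}$ range (Table~\ref{table:cubic_bip_graphs}), and the paper explicitly explains why such a brute-force pass cannot be done. Your ``strictly restricted family of obstructed configurations'' is exactly the object the paper makes precise via type~1/type~2 edges; without that definition and the accompanying $\ge 18$ bound, the computation you describe does not terminate in realistic time. A secondary issue is that your matching closures $\{u_1u_3,u_2u_4\}$, $\{u_1u_4,u_2u_3\}$ may create multiple edges or fail $3$-connectivity, which you patch with unspecified ``gadget closures''; the paper's single two-vertex closure avoids this and, more importantly, makes the type-analysis clean because all four cut edges are encoded as the four $3$-edge paths through the one new edge~$e_i$.
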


\begin{proof}
For a cubic graph $H$, define two types of edge $e$:
\vspace{-0.5em}
\begin{itemize}
\addtolength{\itemsep}{-2mm}
\item Type 1: There is a hamiltonian cycle through at most 3 of the 4
paths of three edges whose central edge is~$e$.
\item Type 2: At least one of the auxiliary (non-bipartite) graphs $H',H''$ defined as
in Figure~\ref{fig:split4-type2} has no hamiltonian cycle.
\end{itemize}

\begin{figure}[h!t]
	\centering
	\includegraphics[width=0.45\textwidth]{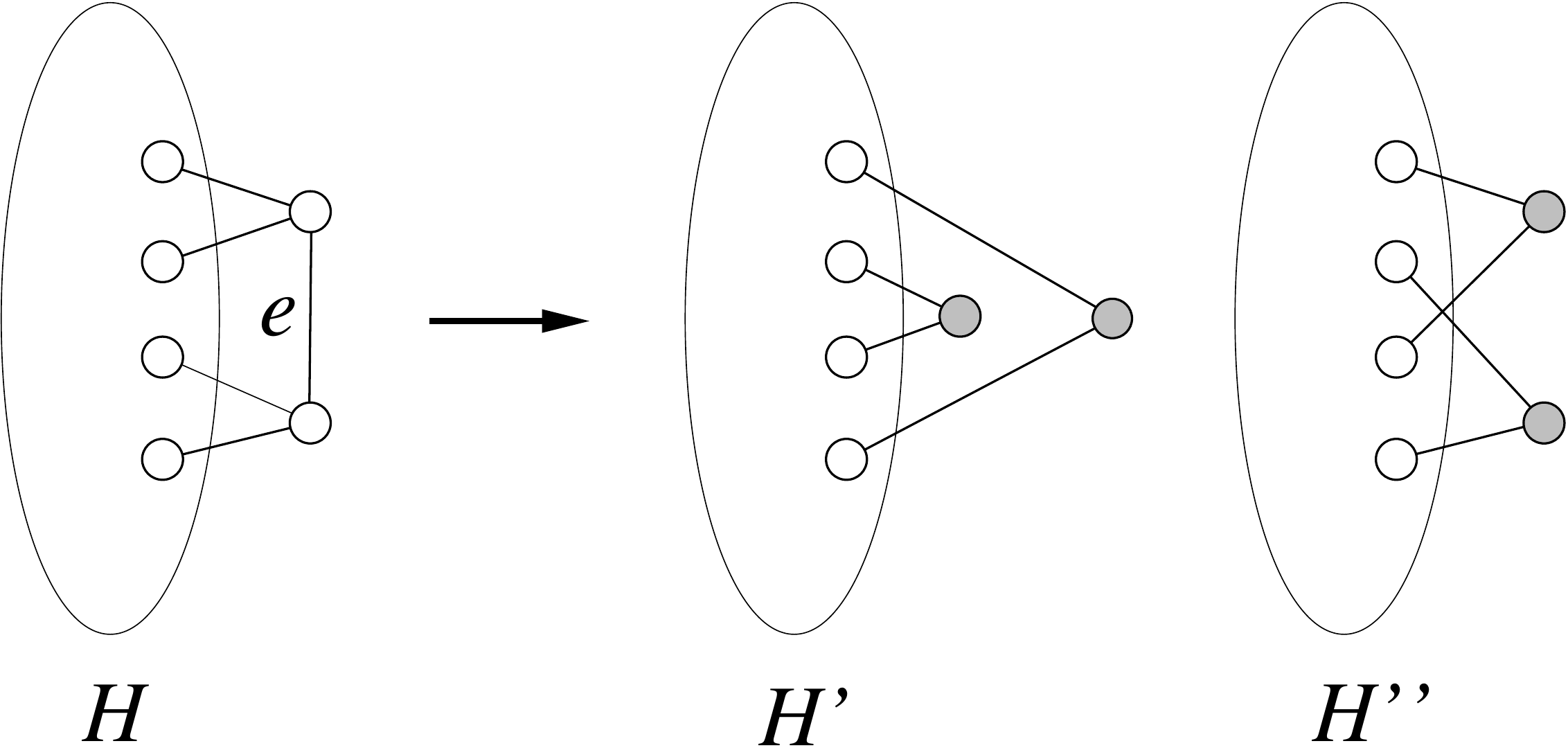}
	\caption{Auxiliary graphs for type 2 edges used in the proof of Lemma~\ref{lem:cyc4}.}
	\label{fig:split4-type2}
\end{figure}

\begin{figure}[h!t]
	\centering
	\includegraphics[width=0.6\textwidth]{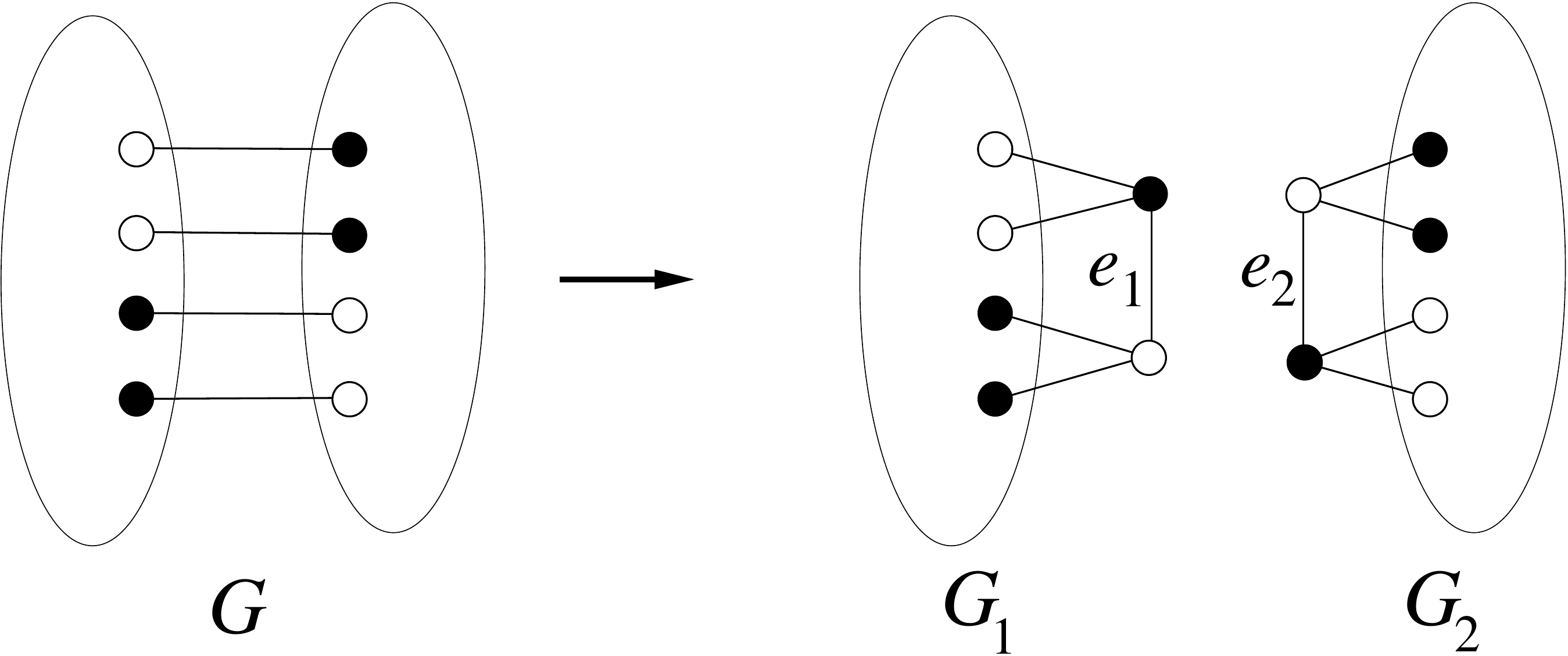}
	\caption{Splitting a graph into two at a 4-edge cut.}
	\label{fig:4cut}
\end{figure}

Now let $G$ be a minimal non-hamiltonian bipartite cubic graph  with
 cyclic connectivity~4.  
 By Lemma~\ref{lem:parity}, we can divide $G$ into two parts at a 4-edge
 cut and use new edges $e_1,e_2$ to complete
 the parts into bipartite cubic 3-connected graphs
 $G_1,G_2$ as shown in Figure~\ref{fig:4cut}.  The 3-connectivity of
 $G_1$ and $G_2$ follows from the observation that a 2-cut in either
 of them would imply a cyclic 3-edge cut in~$G$.

If $G_1$ has a hamiltonian cycle using $e_1$, then $e_2$ must have
type~1, since otherwise $G$ would be hamiltonian.
Similarly, if $G_1$ has a hamiltonian cycle avoiding $e_1$, then $e_2$
must have type~2, since otherwise $G$ would be hamiltonian.

A direct computation, again using {\em minibaum} and  {\em cubhamg},
showed that type~1 or type~2 edges first appear at 18 vertices, so we have
$|V(G_1)|\ge 18$, and similarly $|V(G_2)|\ge 18$.  Consequently,
it suffices to test combinations of graphs $G_1,G_2$ with
$18\le |V(G_1)|\le 26$ and $18\le |V(G_2)|\le 54-|V(G_1)|$
with $G_1$ restricted to graphs having an edge of type~1 or type~2.
We could also restrict $G_2$ in the same way, but the number of
possibilities for $G_1$ when $G_2$ is large is so small that simply
testing every graph as $G_2$ is as fast as checking $G_2$ for
edges of type~1 or type~2.

This computation yielded only the Georges--Kelmans graph.  The counts
are shown in Table~\ref{tab:cyc4} (which includes graphs that are connected
but not 3-connected). Using an independent implementation, 
we obtained exactly the same number of graphs with type 1 or type 2 edges
as in Table~\ref{tab:cyc4}. Using another independent program we again tested
all combinations of $G_1,G_2$ and this indeed only yielded the Georges--Kelmans graph.

For the second part of the lemma, it is only necessary to observe that the
Georges--Kelmans graph has girth~6.
\end{proof}

\begin{table}[h!t]
\centering
\begin{tabular}{cccc|cc}
  $|V(G_1)|$ & type 1 count & type 2 count & $G_1$ total & $|V(G_2)|$ & $G_2$ count \\
  \hline
  18 & 1 & 1 & 0 & 18--36 & 6\,461\,410\,120 \\
  20 & 1 & 1 & 2 & 18--34 & 3\,373\,711\,502\\
  22 & 5 & 3 & 8 & 18--32 & 286\,012\,884\\
  24 & 15 & 14 & 27 & 18--30 & 26\,038\,636\\
  26 & 71 & 56 & 121 & 18--28 & 2\,571\,779
\end{tabular}
\caption{The numbers of graphs that can act as $G_1,G_2$ in the proof of Lemma~\ref{lem:cyc4} with the candidates for $G_1$ also listed according to their type.}
\label{tab:cyc4}
\end{table}

At this stage we could consider finishing the proof of Theorem~\ref{thm:main}
by computation alone.  However, we estimate the number of bipartite cubic graphs
with girth 6 up to 48 vertices to be around $4.5\times 10^{15}$.  With a lower
generation rate of about 40,000 graphs per second, this still amounts to
3,500 CPU years. By Lemma~\ref{lem:cyc4}, we could also restrict our
search to cyclic connectivity at least~5, but the counts are not much less
and we don't know of a fast generator.

\begin{lemma}\label{lem:cyc5}
 A minimal non-hamiltonian 3-connected  bipartite cubic graph
 cannot have cyclic connectivity~$5$.
\end{lemma}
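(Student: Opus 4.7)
The plan is to mirror the strategy of Lemma~\ref{lem:cyc4}. Suppose for contradiction that $G$ is a minimal non-hamiltonian 3-connected bipartite cubic graph with cyclic connectivity exactly~5 and $|V(G)|\le 48$. Take a cyclic 5-edge cut dividing $G$ into sides $A$ and $B$. By Lemma~\ref{lem:parity}, each side's five degree-2 boundary vertices split as $(1,4)$ between the two color classes; on side $A$, label them $u_1$ (singleton color) and $u_2,u_3,u_4,u_5$ (other color), and analogously $v_1,\ldots,v_5$ on side $B$.

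Unlike the 4-cut case, where closing required only additional edges, the $(1,4)$ split forces us to add new vertices to a side. The minimum-size bipartite gadget introduces three new vertices: a new vertex $z$ adjacent to $u_1$ and to two further new vertices $y_1,y_2$, with each $y_i$ in turn adjacent to two of $\{u_2,u_3,u_4,u_5\}$. The three ways to pair $\{u_2,\ldots,u_5\}$ into the $y_i$'s give three candidate closed graphs $G_A^{(k)}$ (and similarly $G_B^{(k)}$ for side $B$). Each is bipartite cubic, is 3-connected by a Menger argument (a small cut would yield a cyclic $\le 4$-edge cut in $G$, contradicting cyclic 5-connectivity), and is strictly smaller than $G$ since each side contains at least a cycle. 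By minimality, all six closed graphs are hamiltonian. A hamiltonian cycle of $G_A^{(k)}$ threads through the gadget in one of three ways at $z$, encoding either a hamiltonian path in side $A$ between two of the color-$u_2$ boundary vertices (via the route $y_1$--$z$--$y_2$) or a spanning 2-path system with specified endpoints involving $u_1$ (via $u_1$--$z$--$y_i$).

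A hamiltonian cycle of $G$ meets the 5-cut in either 2 or 4 edges, corresponding respectively to a hamiltonian path or a spanning 2-path system in each side with specified boundary endpoints. We define type conditions on pairs $(G_A^{(k)}, G_B^{(l)})$ analogous to types~1 and 2 of Lemma~\ref{lem:cyc4}, which rule out every way of reassembling a hamiltonian cycle of $G$ from matching hamiltonian cycles of the two closed graphs; the case where the cut edge at $u_1$ is used needs a supplementary auxiliary construction, since the minimum 3-vertex gadget does not encode hamiltonian paths starting at $u_1$. We then enumerate (using \emph{minibaum} and \emph{cubhamg}, with independent verification as in Lemma~\ref{lem:cyc4}) the smallest bipartite cubic 3-connected graphs exhibiting each type and check which pair-combinations can coexist. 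The main obstacle is the case analysis over the nine gadget pair-combinations with several hamiltonian signatures each, together with the supplementary constructions for the $u_1$-endpoint case; the expected outcome is that the minimum side sizes forced by the type conditions sum to more than~48, contradicting $|V(G)|\le 48$.
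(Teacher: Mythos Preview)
Your proposal is a plan rather than a proof, and its decisive step is unsupported. You write that ``the expected outcome is that the minimum side sizes forced by the type conditions sum to more than~48''. This hope is not justified and, in light of what the paper's computation actually finds, is almost certainly false. The paper classifies the sides (``5-pieces'') by which of twelve auxiliary test graphs are hamiltonian; pieces that fail to be in the best class (Class~2) already appear at 15 vertices, and by 35 vertices there are millions of them. No size argument suffices: the paper has to \emph{enumerate} all 5-pieces up to 35 vertices (about 5~CPU years), show none lie in Class~0, and then explicitly \emph{join} every pair of pieces in Class~1$\setminus$Class~2 totalling at most~50 vertices and check the result for hamiltonicity---only the Georges--Kelmans graph emerges, and it has cyclic connectivity~4. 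Your scheme never reaches this join-and-test phase, nor does it define the ``type conditions'' precisely enough to argue anything about minimum sizes.

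There is also a structural weakness in your use of minimality. Closing a side with your three-vertex bipartite gadget and invoking minimality gives you, for each of the three pairings, \emph{some} hamiltonian cycle of $G_A^{(k)}$; you do not get to choose which one, so you learn of one spanning path or 2-path pattern per pairing, three per side in total. This is far less information than the paper extracts from its twelve test graphs per side, and it is not clear how one would guarantee a compatible match between the two sides from only these three witnesses. The paper, by contrast, does \emph{not} rely on minimality to analyse the pieces (its test graphs are non-bipartite, so minimality does not apply to them); instead it reserves minimality for a single, sharp use (Claim~2) to dispose of the case where one side has at most~13 vertices, by closing the \emph{other} side with a tailor-made three-vertex path and forcing a hamiltonian path of a specific shape. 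Finally, note that you need the girth~$\ge 6$ conclusion of Lemma~\ref{lem:cyc4} (which you do not invoke) even to ensure the sides are 5-pieces in the paper's sense and to get the lower bound of~11 on each side's order.
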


\begin{proof}
Define a \textit{5-piece} to be a connected bipartite graph of
girth at least 6, cubic apart from 5 vertices of degree~2.
By Lemma~\ref{lem:parity}, a 5-piece has a vertex of
degree~2 whose colour is different from the other vertices
of degree~2; call that the \textit{special vertex} of the 5-piece.
Also, the number of vertices in the 5-piece with the same
colour as the special vertex is one less than the number
with the other colour.

\begin{figure}[h!t]
	\centering
	\includegraphics[width=0.58\textwidth]{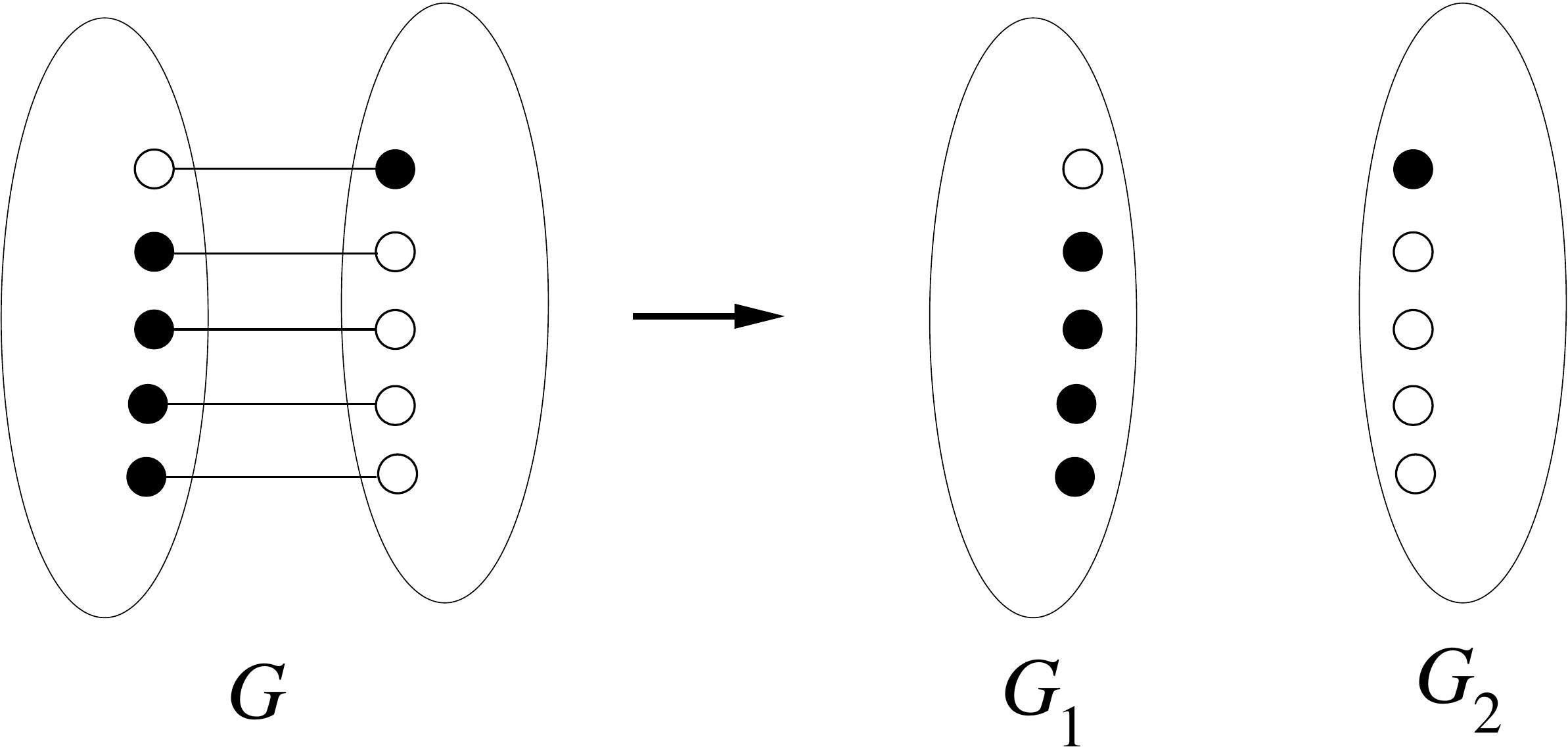}
	\caption{Splitting a graph into two at a 5-edge cut.}
	\label{fig:5cuta}
\end{figure}

\begin{figure}[h!t]
	\centering
	\includegraphics[width=0.62\textwidth]{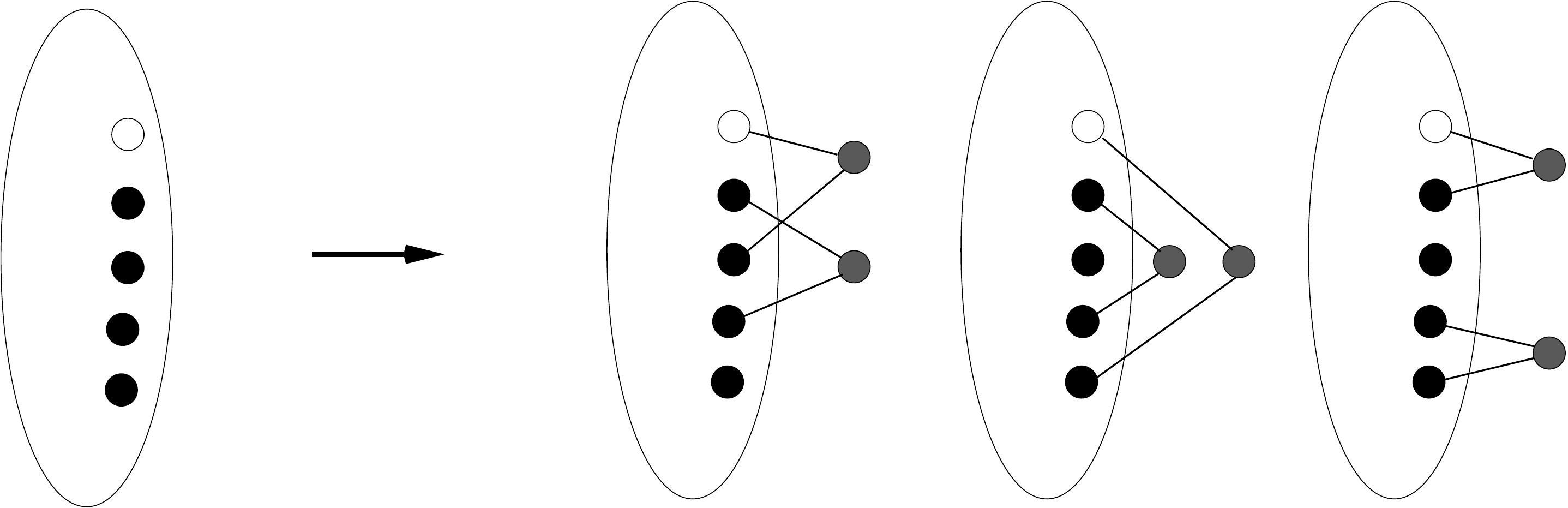}
	\caption{Three of the 12 test graphs of a 5-piece.}
	\label{fig:5cuta2}
\end{figure}

Let $G$ be a bipartite cubic  graph
 with cyclic connectivity~$5$ and at most 50 vertices -- so $G$ is a candidate for a counterexample to the lemma.
Separate $G$ at a 5-cut into 5-pieces $G_1$ and $G_2$ as
in Figure~\ref{fig:5cuta}.  We know that $G_1$ and $G_2$
are connected since otherwise $G$ would have an edge
cut of two independent edges.  The special vertices of
$G_1$ and $G_2$ are adjacent in~$G$; call that the
\textit{special edge} of~$G$.

The difference in the numbers of vertices in the two colour classes in 5-pieces imply:
\vspace{-0.7ex}
\begin{itemize}
\itemsep=0pt
\item[(a)] If a hamiltonian cycle in $G$ uses 4 edges of the cut, then
one of those edges is the special edge.
\item[(b)] If a hamiltonian cycle in $G$ uses only 2 edges of the cut, then
neither of those edges is the special edge.
\end{itemize}

Given a 5-piece $H$, a ``test graph'' for $H$ is formed by adjoining
two vertices of degree~2, together adjacent to four distinct vertices
of degree~2 in $H$, one of which is the special vertex.
There are 12 (non-bipartite) test graphs, three of which are shown in Figure~\ref{fig:5cuta2}.

Classify 5-pieces as follows:
\vspace{-0.5em}
\begin{itemize}
\addtolength{\itemsep}{-2mm}
\item Class 0: None of the test graphs is hamiltonian;
\item Class 1: At least one of the test graphs is hamiltonian;
\item Class 2: All of the test graphs are hamiltonian (a subset of Class 1).
\end{itemize}

\noindent\textit{Claim 1: } 
If $G_1$ and $G_2$ are in Class 1, with one of them in
Class 2, then $G$ is hamiltonian.

\noindent\textit{Proof: } Suppose that $G_2$ is in Class 2. 
Since $G_1$ is in Class 1, it can be covered
by two paths with endpoints $v_1,v_2$ and $w_1,w_2$,
where one of $v_1,v_2,w_1,w_2$ is the special vertex. 
Join these two paths into a hamiltonian cycle in $G$ using the
hamiltonian cycle in the test graph of $G_2$ where one vertex was connected
to $v_1,v_2$ and the other to $w_1,w_2$.

\medskip 

\noindent\textit{Claim 2: } If $|V(G_1)|\le 13$, then $G$ is hamiltonian.

\noindent\textit{Proof: }
As shown in Table~\ref{tab:cyc5} and checked twice by computer,
there are no 5-pieces of order less than 11,
one of order 11 and two of order~13.
They are depicted in Figure~\ref{fig:5cutb}.
All of them are in Class~2.
So, by Claim 1, the only possibility that $G$ is non-hamiltonian is for $G_2$
to be in Class~0.  A hamiltonian cycle in $G$ that uses 4 edges of the
cut would imply hamiltonicity of one of $G_2$'s test graphs, so the
only possibility is a hamiltonian cycle in $G$ that uses 2 edges of the cut
(none of them the special edge) and induces hamiltonian paths in
$G_1$ and $G_2$.

\begin{figure}[h!t]
	\centering
	\includegraphics[width=0.9\textwidth]{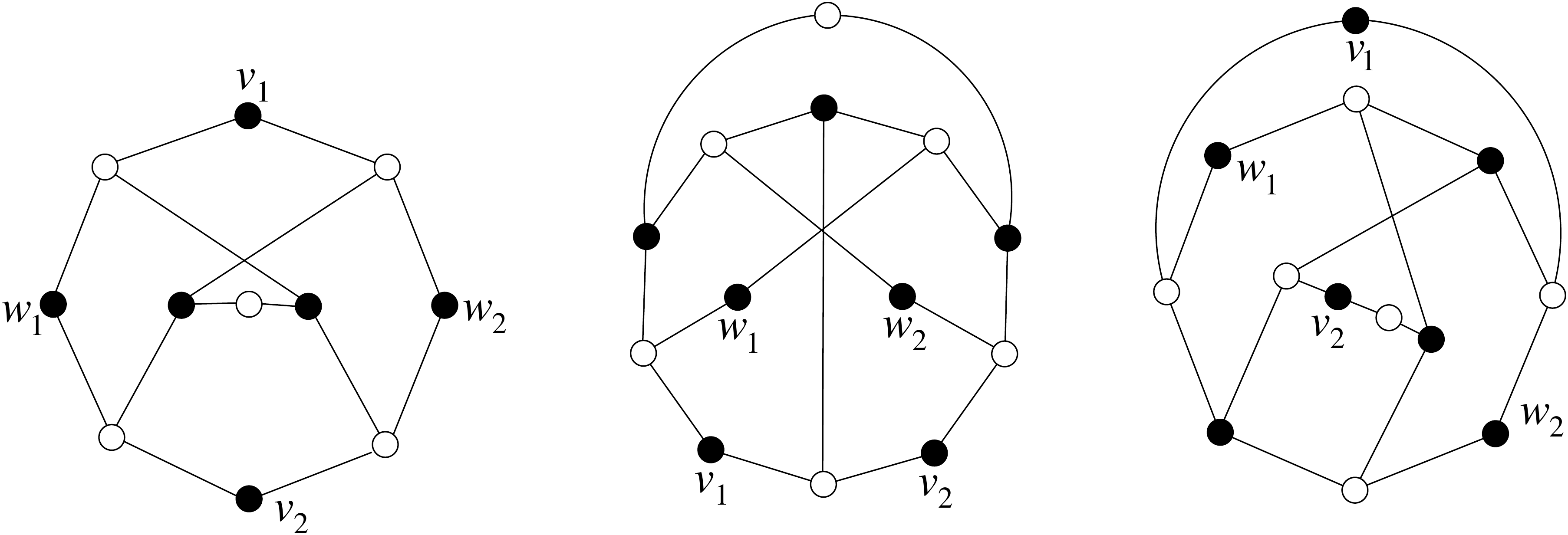}
	\caption{The smallest 5-pieces.}
	\label{fig:5cutb}
\end{figure}

Looking at the hamiltonian paths in $G_1$, we
find that the non-special vertices of degree~2 can be labelled
$v_1,v_2,w_1,w_2$ such that any pair of them can be joined
by a hamiltonian path except $v_1,v_2$ and possibly $w_1,w_2$.
(See Figure~\ref{fig:5cutb}.)
Let $v'_1,v'_2,w'_1,w'_2$ be the vertices of $G_2$ adjacent in $G$
to $v_1,v_2,w_1,w_2$, respectively.
Now construct a bipartite cubic graph $G^+_2$ from $G_2$ by adjoining a
path $xyz$ with $x$ adjacent to $v'_1$ and $v'_2$,
$y$ adjacent to the special vertex, and $z$ adjacent to $w'_1$ and $w'_2$.
Since $|V(G^+_2)| < |V(G)|$, and it is 3-connected since otherwise
$G$ would have a 3-edge cut, $G^+_2$ is hamiltonian by the
minimality of $G$.  Also, since $G_2$ is in Class~0, any hamiltonian
cycle in $G^+_2$ cannot use exactly one of the edges $xy$ and $yz$
(else one of the test graphs of $G_2$ is hamiltonian), so it  must use both
$xy$ and $yz$.  This provides a
hamiltonian path in $G_2$ from a vertex in $\{v'_1,v'_2\}$ to a vertex in $\{w'_1,w'_2\}$.
Any such path can be
combined with a hamiltonian path in $G_1$ to make a hamiltonian
cycle in~$G$.  This completes the proof of Claim~2.

\begin{table}[ht!]
\centering
\begin{tabular}{c|cc}
  $n$ & 5-pieces & $|\text{Class\,1} \setminus \text{Class 2}|$\\
  \hline
  11 & 1 & 0\\
  13 & 2 & 0\\
  15 & 12 & 2\\
  17 & 90 & 7\\
  19 & 754 & 14\\
  21 & 7\,003 & 25\\
  23 & 70\,639 & 68\\
  25 & 766\,134 & 251\\
  27 & 8\,862\,333 & 1\,086\\
  29 & 108\,917\,294 & 6\,098\\
  31 & 1\,417\,268\,482 & 44\,842\\
  33 & 19\,471\,253\,036 & 393\,423\\
  35 & ~281\,715\,327\,672 & 3\,887\,896
\end{tabular}
\caption{Computation for Lemma~\ref{lem:cyc5}.}
\label{tab:cyc5}
\end{table}

\medskip

Now we can complete the proof of the lemma.
We used the program {\em multigraph} to construct all 5-pieces and tested them with {\em cubhamg}. (\emph{Multigraph} can generate all simple graphs or multigraphs with a given degree sequence. It implements the same ideas as {\em minibaum}~\cite{brinkmann96} but for general
degree sequences, and as it does not contain new ideas, it was never published. For a large
number of degree sequences its results were tested and confirmed by 
the program described in~\cite{Gr93}.)
The computations showed that there are no 5-pieces of order at most 35
in Class~0.  The number in Class 1 $\setminus$ Class 2 are listed
in Table~\ref{tab:cyc5}.  The total computation time was about 5 CPU years.
We also independently determined all 5-pieces up to 29 vertices by using {\em minibaum} to generate all bipartite cubic graphs and using a separate program to remove one vertex and two of its neighbours in all possible ways of each input graph and retaining the connected graphs of girth at least 6 among the graphs resulting from this operation. We then determined the class of each 5-piece using another independent implementation and the results were in complete agreement with the counts reported in Table~\ref{tab:cyc5}.

We joined all combinations of two of these 5-pieces in Class 1 $\setminus$ Class 2 up to 50
vertices and found only
the Georges--Kelmans graph among those that were 3-connected. (Also here we replicated this result using an independent implementation of the joining program.)
The Georges--Kelmans graph has cyclic connectivity~4.
Together with Claims 1 and 2, this completes the proof.
\end{proof}


\begin{lemma} \label{lem:girth8}
All bipartite cubic graphs with girth at least 8 up to 50 vertices are hamiltonian.
\end{lemma}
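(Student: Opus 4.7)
The plan is to prove this lemma purely by computation, leveraging the fact that the girth-8 restriction dramatically reduces the number of candidate graphs compared to the general bipartite cubic case. From Table~\ref{table:cubic_bip_graphs}, the total number of connected bipartite cubic graphs with girth at least 8 and at most 50 vertices is approximately $2.5 \times 10^9$, which is well within reach of the tools (\emph{minibaum} for generation and \emph{cubhamg} for hamiltonicity testing) already used elsewhere in the paper.

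First I would run \emph{minibaum} in its ``bipartite, girth $\ge 8$'' mode to generate, for each even $n$ with $30 \le n \le 50$, every connected bipartite cubic graph on $n$ vertices with girth at least~8. This reproduces the rightmost column of Table~\ref{table:cubic_bip_graphs}, and the counts themselves serve as a consistency check against the generation. Then, for each generated graph, I would invoke \emph{cubhamg} and record whether a hamiltonian cycle is found. The claim is established by verifying that every graph in the list is hamiltonian; since the lemma is stated as a positive result, no non-hamiltonian graph should ever be produced, and any such occurrence would be reported explicitly.

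Following the same methodology used in Lemmas~\ref{lem:cyc3}--\ref{lem:cyc5}, I would then replicate the whole pipeline with an independent implementation: using the alternative generator and the second hamiltonicity tester written for this project, regenerate the same lists of graphs (checking the counts match Table~\ref{table:cubic_bip_graphs} exactly) and verify hamiltonicity of each one independently. Because no free parameter enters the statement, the two runs should agree on every graph, giving confidence in the computational result.

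The main obstacle is simply CPU time at $n=50$, where the roughly $2.47 \times 10^9$ graphs dominate the workload; at the generation and testing rates cited earlier in the paper (tens of thousands of graphs per second for generation, and hamiltonicity testing being fast in practice on high-girth cubic graphs because the density of forced choices is high), this is large but tractable, in contrast with the $\sim 4.5 \times 10^{15}$ girth-6 graphs that forced the structural approach of Lemma~\ref{lem:cyc4} and Lemma~\ref{lem:cyc5}. Note also that 3-connectivity need not be assumed separately: any non-3-connected graph encountered would just be tested as well, and the lemma is proved as soon as the hamiltonicity test succeeds on all of them.
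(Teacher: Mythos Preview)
Your proposal is correct and takes essentially the same approach as the paper: exhaustively generate all connected bipartite cubic graphs with girth at least~8 on up to 50 vertices and test each for hamiltonicity. The only notable differences are that the paper found a modification of Meringer's \emph{genreg} to be far faster than the original generator (8 CPU months versus 22 CPU years), and that its independent verification established the stronger property that for every pair of distinct edges $e,e'$ there is a hamiltonian cycle using $e$ and avoiding~$e'$.
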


\begin{proof}
 This result was purely computational.
 There are only about $2.5\times 10^{9}$ such graphs (see Table~\ref{table:cubic_bip_graphs})
 but the much reduced generation time for girth~8 meant that it took about 22 CPU years.
 
 After completing this long computation, we found that a modification of Meringer's program
 {\em genreg}~\cite{Meringer} made by the first author could generate the graphs in only 8 CPU months.
 This prompted us to perform a stronger test: all bipartite cubic graphs with girth at least 8 up to
 50 vertices have the property that for each pair of distinct edges $e,e'$ there is a hamiltonian
 cycle using~$e$ and not using~$e'$.
\end{proof}

\begin{proof}[Proof of Theorem~\ref{thm:main}]
Let $G$ be a non-hamiltonian 3-connected bipartite cubic graph with at
most 48 vertices. By Lemmas~\ref{lem:cyc3}--\ref{lem:girth8},
we know that $G$ is cyclically 6-connected and has girth~6.

Our approach is as follows: We define a reduction that transforms a bipartite cubic graph
with girth $6$ on $n$ vertices to a bipartite cubic graph with some 4-cycles on 
$n-8$ vertices. The set of reduced graphs will be much smaller than the set
of original graphs, but there will also be irreducible graphs on $n$ vertices.
The irreducible graphs have to be generated and tested directly and
on the reduced graphs some tests have to be performed in order to
guarantee that they do not come from a non-hamiltonian graph,
and reduced graphs that do not pass the test have to be extended and
checked for hamiltonicity.

In order to be able to reduce the computation time to an amount
available on a modern cluster, there has to be a balance between the
two parts.  On one hand the reduction should be so that there are not
too many irreducible graphs and that it is possible to generate all
irreducible graphs. On the other hand the tests necessary on the reduced graphs
must not be too expensive, so that the reduced graphs can be tested in an
affordable amount of time. The (admittedly very technical)
reduction we used is the following:

Let $\hat G$ be the bipartite graph with $16$ vertices depicted on the left hand side of
Figure~\ref{fig:8gon}. 
We will call a bipartite cubic graph $G$ of girth 6 
{\em reducible} if it contains $\hat G$ in a way that
has the following properties:

\begin{itemize}
\itemsep=0pt
\item[(i)] Neither $v'_8$ and $v'_3$ nor $v'_7$ and $v'_4$ are adjacent.

\item[(ii)] At least one of the paths $v'_1,v_1,v_2,v'_2$ and $v'_5,v_5,v_6,v'_6$ 
lies on a 6-cycle.

\item[(iii)] If exactly one of the paths in (ii) lies on a 6-cycle, then in addition
at least one of the paths $v'_8,v_8,v_1,v_2,v_3,v'_3$ and $v'_4,v_4,v_5,v_6,v_7,v'_7$
lies on an $8$-cycle.

\end{itemize}

The reduced graph $G_r$ is then obtained by deleting $v_1,\dots ,v_8$ and adding the edges
$e_1=\{v'_1,v'_2\}, e_2=\{v'_8,v'_3\}, e_3=\{v'_7,v'_4\}$, and $e_4=\{v'_6, v'_5\}$,
as depicted on the right hand side of Figure~\ref{fig:8gon}. 
It is obvious that $G_r$ is bipartite and cubic. Moreover, since
$G$ has girth 6 and satisfies property (i), $G_r$ is simple.
Since $G_r$ is cubic, each component of $G_r$ contains a cycle.
If cycles $Z_1$ and $Z_2$ lie in different components of $G_r$, then with Menger's theorem
applied to the graph obtained by adding two new vertices and connecting them once to
all vertices of $Z_1$ and once to all vertices of $Z_2$, the
cyclic 5-connectivity of $G$ means that there are at least 5 edge-disjoint
paths in $G$ from $Z_1$ to $Z_2$.  However, at most 4 of these paths
can contain vertices of the set $\{v_1,\dots ,v_8\}$, so one of them connects
$Z_1$ and $Z_2$ in $G_r$, contradicting the assumption that $Z_1$
and $Z_2$ are in different components.  Thus, $G_r$ is connected.

\begin{figure}[h!t]
	\centering
	\includegraphics[width=0.75\textwidth]{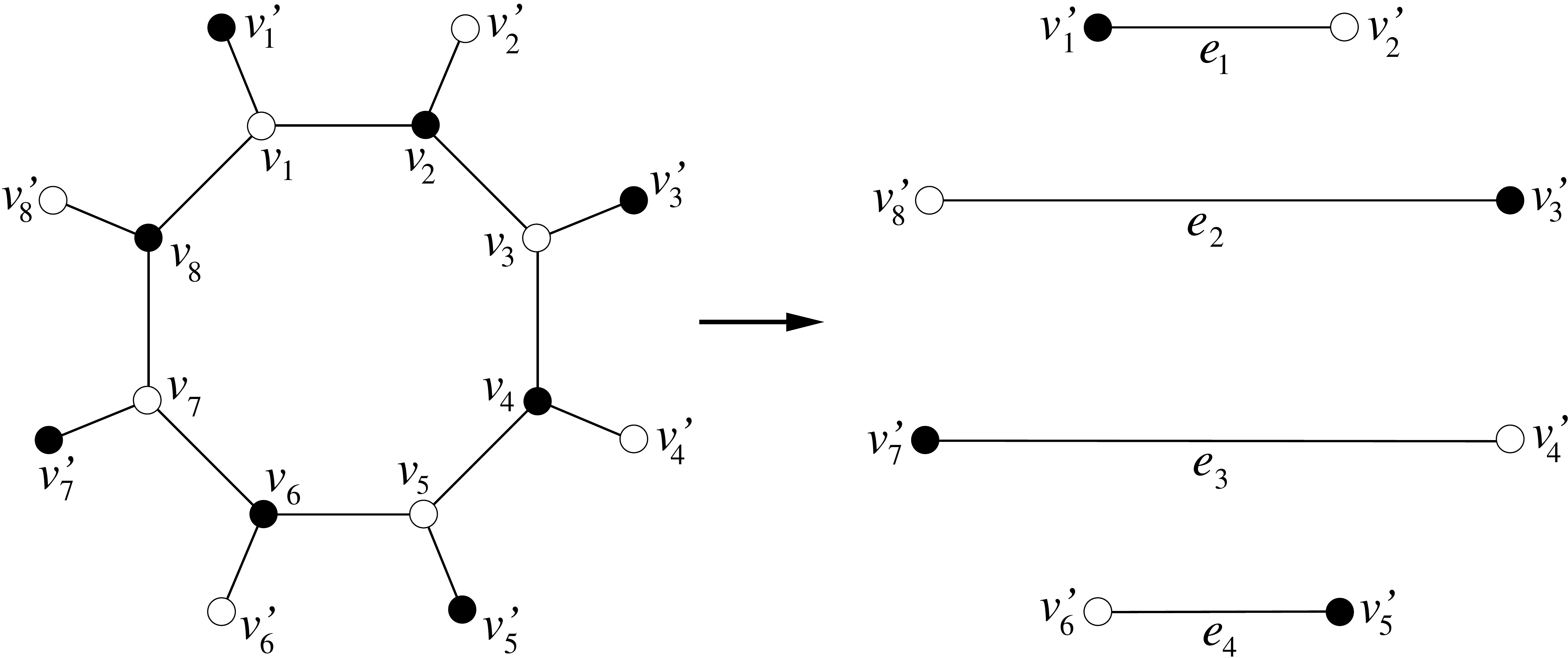}
	\caption{The reduction of an 8-cycle.}
	\label{fig:8gon}
\end{figure}

If $G_r$ is hamiltonian, this does not necessarily imply that $G$ is also hamiltonian,
but some hamiltonian cycles in $G_r$ imply hamiltonicity in $G$. We call a
hamiltonian cycle $H_r$ of $G_r$ {\em extendable} for $(e_1,\dots ,e_4)$ 
if $E(H_r)\cap\{e_1,e_2,e_3,e_4\}$ is one of $\{e_1\}$, $\{e_4\}$, $\{e_1,e_2\}$,
$\{e_2,e_3\}$ and $\{e_3,e_4\}$.
In each of these cases it is easy to see that $E(H_r)\setminus \{e_1,\dots ,e_4\}$
can be extended to a hamiltonian cycle in $G$.

Our strategy now consists  of two parts:

\begin{itemize}
\itemsep=0pt
\item[(a)] Generate all irreducible bipartite cubic  graphs of girth 6
on up to $48$ vertices and test them for hamiltonicity.

\item[(b)] Generate all connected bipartite cubic graphs $G_r$ with at most 40 vertices
that may be the reduction of a bipartite cubic graph $G$ of girth~6.  For each 4-tuple
 $(e_1,\ldots ,e_4)$ of edges of $G_r$ that may be the new edges added in the
 reduction, determine whether $G_r$ has a hamiltonian cycle extendable for
 $(e_1,\ldots,e_4)$.  If not, reconstruct $G$ and test it for hamiltonicity.

\end{itemize}

For part (a) we used the program {\em minibaum}. 
The algorithm used in
{\em minibaum} constructs the graphs by recursively adding one edge at a time.
The subgraph constructed at each step remains part of all descendants,
so as soon as the subgraph on the left hand side of Figure~\ref{fig:8gon}
appears with conditions (i)--(iii) satisfied, we know that all cubic graphs
descended from this step are reducible.  Consequently, the generation tree
can be pruned at this point.

The program was run on a cluster with various different processors. It needed 
about 7 CPU years and generated 136,941,076 irreducible graphs.
Among the graphs were 4 non-hamiltonian ones, but they were not
3-connected. We also independently generated all irreducible graphs up to 40 vertices
using the unmodified version of {\em minibaum} to generate all bipartite
cubic graphs of girth 6 and an independently implemented program to filter the
irreducible graphs. The results were in complete agreement.

Part (b) of the proof was the most computationally expensive step in our
whole project.  We illustrate the magnitude of the task for $G_r$ having 40 vertices.
As indicated in Table~\ref{table:cubic_bip_graphs}, there are
7,439,833,931,266 connected bipartite cubic graphs with 40 vertices.
In total (up to reversal), there were 129,922,879,860,637,000 possibilities
for the 4-tuple $(e_1,e_2,e_3,e_4)$.
In all but 417,626,620,084 of these (1 in 311,098) there was an
extendable hamiltonian cycle.  Since hamiltonian cycles can be extendable
for many 4-tuples, the total number of hamiltonian cycles found was ``only''
131,062,665,710,324.
Of the 417,626,620,084 4\nobreakdash-tuples for which there was no extendable
hamiltonian cycle, the reconstructed graph on 48 vertices was hamiltonian
except in 368 cases, none of them 3-connected.
The total time was 5 CPU years for generation and 85 CPU years for
hamiltonian cycle investigation. We also independently verified the computations
for part (b) up to 34 vertices.

That completes the proof of the first part of Theorem~\ref{thm:main}.
For the second part, recall that Lemmas~\ref{lem:cyc3}--\ref{lem:girth8}
apply also to 50 vertices.
\end{proof}

\begin{remark}
For each even girth $g \geq 4$, there are infinitely many non-hamiltonian 3-connected bipartite cubic graphs of girth~$g$.
\end{remark}

\begin{proof}
There exist 3-connected bipartite cubic graphs of arbitrary large even girth $g$ (see e.g.~the survey~\cite{wormald1999}).  A non-hamiltonian 3-connected
bipartite cubic graph of arbitrary even girth $g$ can be obtained by replacing every vertex $v$ of a non-hamiltonian 3-connected bipartite cubic graph $G$ by a copy $G'_v$ of a
3-connected bipartite cubic graph $G'$ of girth $g$ with one vertex removed. If $\{v_1,v_2\}$ is an edge in $G$, then a vertex of degree $2$ in $G'_{v_1}$ is connected to a vertex of degree $2$ in $G'_{v_2}$ in a way that a 3-regular graph is constructed.

If the resulting graph has a hamiltonian cycle, it must enter and leave each $G'_v$
exactly once. Contracting each $G'_v$ to a single vertex would then recover a
hamiltonian cycle in~$G$, contradicting the assumption that $G$ is non-hamiltonian.
\end{proof}

\section{Non-hamiltonian cyclically 5-connected bipartite cubic graphs}
\label{sect:cyc5}

Cubic graphs of course cannot have connectivity greater than 3, but among
all cubic 3-connected graphs, the cyclic connectivity provides a measure of
how strong the connections between the parts of the graph are.
Though the Georges--Kelmans graph has girth~6, which is a necessary prerequisite
for being cyclically 6-connected, it is only cyclically 4-connected. To be exact:
it has no cyclic edge-cuts of size~3, but 8 cyclic edge-cuts of size~4.

Extending a folklore technique (described to us by Carol Zamfirescu)
for constructing graphs without hamiltonian paths from graphs without
hamiltonian cycles, we will now describe how to
construct non-hamiltonian cyclically 5-connected bipartite cubic graphs
out of a suitable non-hamiltonian bipartite cubic
graph with lower cyclic connectivity, such as the Georges--Kelmans graph.

Recall that minimal cyclic edge cuts in cubic graphs are always
independent edge cuts. We denote the distance between vertices $v,w$ by $d(v,w)$.
Let $G=(V,E)$ be a cyclically 4-connected cubic graph, so that there is a vertex $v \in V$ 
with neighbours $N(v)=\{w_1,w_2,w_3\}$ and an edge $e=\{x,y\}$, so that
$d(v,x)\ge 3$, $d(v,y)\ge 3$, and
for each independent edge cut $C$ of $G$ with $|C|= 4$
we have that neither $v$ nor $x$ are contained in an edge of $C$ and that $v$ and $x$ 
are in different components
of $G\setminus C$. 
Such graphs have girth at least $5$, as a 4-cycle containing $v$ or $x$ would imply
a 4-cut containing $v$ or $x$, and a 4-cycle not containing $v$ or $x$
would imply a 4-cut with $v$ and $x$ in the same component.
It is easy to see by inspection that the Georges--Kelmans graph has
in fact several vertex-edge pairs with this property, for example the pair marked
with asterisks in Figure~\ref{fig:EHGK}.
In the figures showing the operations
we use coloured vertices to illustrate that we can choose to maintain bipartiteness;
nevertheless the construction is not restricted to bipartite graphs.

\begin{figure}[h!t]
	\centering
	\includegraphics[width=0.9\textwidth]{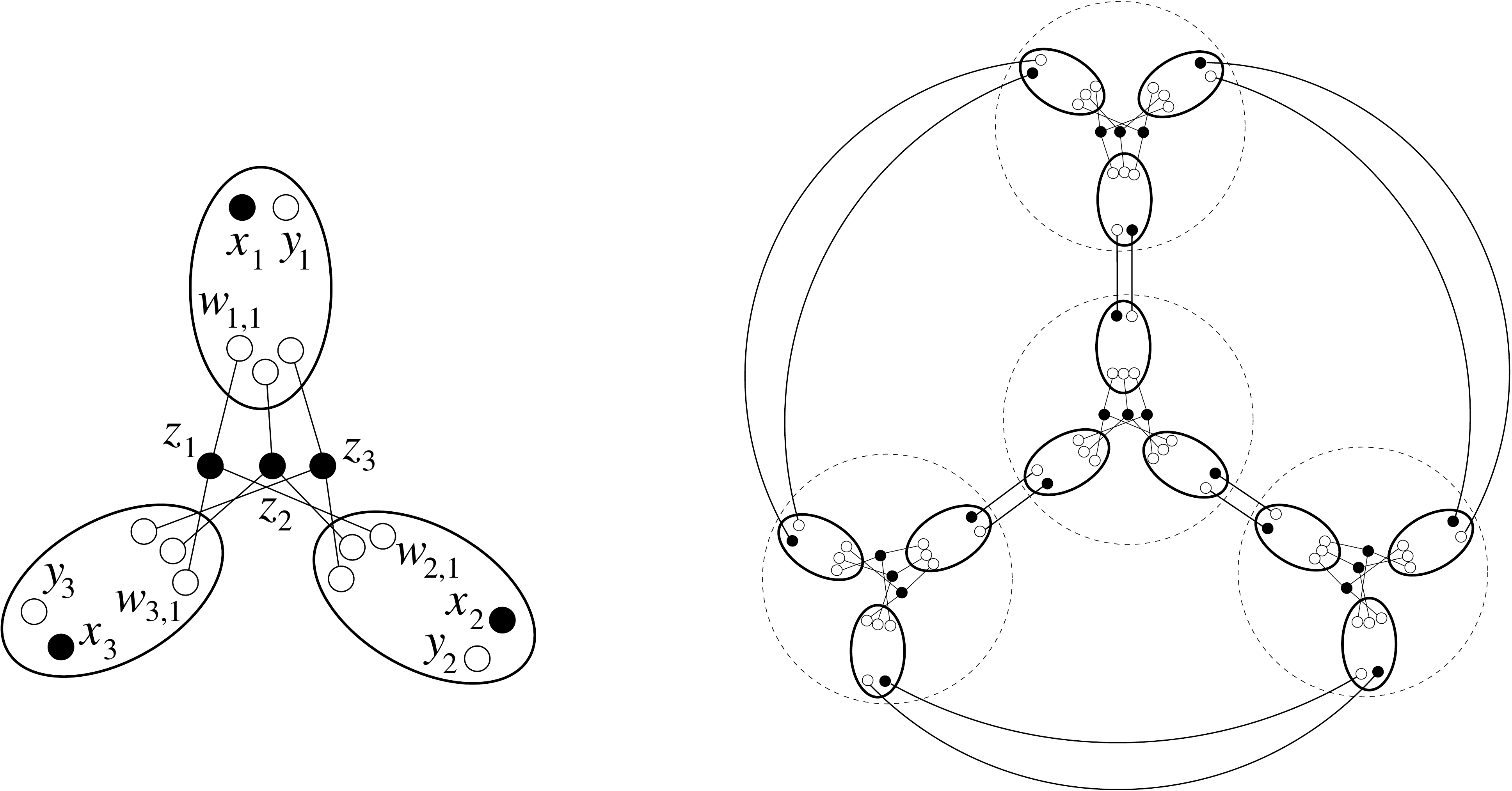}
	\caption{Constructing a cyclically 5-connected graph from a cyclically 4-connected one. The colour
of the vertices $z_i$ and the vertices $w_{i,j}$ can be the other way around. Between the vertices
$x_i$ and $y_i$ there is no edge.}
	\label{fig:cyc5}
\end{figure}

Take three copies $G_1,G_2,G_3$ of $G$ with corresponding vertices $v_i,w_{i,1},w_{i,2},w_{i,3},x_i,y_i$
for $1\le i \le 3$ and add three new vertices $z_1,z_2,z_3$. Then remove $v_1,v_2,v_3$
and for $1\le i \le 3$ and $1\le j \le 3$ connect $z_j$ with $w_{i,j}$ and remove the edge $\{x_i,y_i\}$.
This operation is depicted on the left hand side of Figure~\ref{fig:cyc5}. We call the result of
this operation a {\em triple} and each of the copies $G_i$ of $G$ with $v_i$ and $\{x_i,y_i\}$
removed a {\em brick}. Then
we can take an arbitrary 3-edge connected cubic multigraph $M$, replace each vertex by a triple
and assign the three bricks to the three adjacent edges. Finally we connect vertices
$x_k$ and $y_\ell$ from different bricks that are assigned to the same edge. On the right
hand side of Figure~\ref{fig:cyc5} this is depicted for the graph $M=K_4$, but the smallest
choice for $M$ is the cubic multigraph with $2$ vertices.
We call this operation the {\em triple-operation} $\mathcal{T}(G,v,e,M)$.

\begin{theorem}\label{thm:cyc5nonham}
Let $G=(V,E)$ be a cyclically 4-connected cubic 
non-hamiltonian graph and $v \in V$, $e=\{x,y\}\in E$ 
so that $d(v,x)\ge 3$, $d(v,y)\ge 3$, and
for each cyclic edge cut $C$ of $G$ with $|C|= 4$
we have that neither $v$ nor $x$ are contained in an edge of $C$ and that $v$ and $x$ 
are in different components of $G\setminus C$.
Furthermore let $M$ be a 3-edge-connected cubic multigraph.
Then $\mathcal{T}(G,v,e,M)$ is a cyclically 5-connected
non-hamiltonian graph.
\end{theorem}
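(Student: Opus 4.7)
The plan is to establish the two conclusions separately, both via case analysis on how the object of interest interacts with the brick/triple structure of $\mathcal{T}(G,v,e,M)$.

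For non-hamiltonicity, I would suppose toward a contradiction that $H$ is a hamiltonian cycle of $\mathcal{T}(G,v,e,M)$ and pick any triple $T_u$ with bricks $B_1,B_2,B_3$. Each brick $B_i$ is a copy of $G$ with $v_i$ and the edge $\{x_i,y_i\}$ deleted, so $H\cap B_i$ is a disjoint union of paths covering $V(B_i)$ whose endpoints lie in the boundary $\{w_{i,1},w_{i,2},w_{i,3},x_i,y_i\}$. Let $\alpha_i$ be the number of $w$-edges $\{w_{i,j},z_j\}$ in $H$ and $\beta_i$ the number of inter-triple edges at $x_i,y_i$ in $H$; parity forces $\alpha_i+\beta_i$ to be even, and $\sum_i\alpha_i=6$ because exactly two of the three edges at each $z_j$ are used by $H$. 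The key observation is that two endpoint patterns in some brick can be closed directly into a hamiltonian cycle of $G$ by reinserting $v_i$ together with the added edges: a single hamiltonian path of $B_i$ from $w_{i,j}$ to $w_{i,j'}$, closed via $v_iw_{i,j}$ and $v_iw_{i,j'}$; and a pair of paths with endpoints $\{x_i,w_{i,j}\}$ and $\{y_i,w_{i,j'}\}$, closed via $\{x_i,y_i\}$ together with $v_iw_{i,j},v_iw_{i,j'}$. I would enumerate all admissible $(\alpha_i,\beta_i)$ patterns for a triple and verify that each combination either forces some brick into one of these two good configurations --- an immediate contradiction with the non-hamiltonicity of $G$ --- or else reduces to the bad pattern of two paths $\{x_i,y_i\}$ and $\{w_{i,j},w_{i,j'}\}$ in every brick.

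For the residual situation, each bad pattern in $B_i$ yields a $2$-factor of $G$ whose two cycles separate $v$ from the edge $e$, so the edges of $G$ between the two cycles form an independent cyclic edge cut of $G$. Cyclic $4$-connectivity forces this cut to have size at least $4$, and the $(v,e)$-hypothesis then forces every such $4$-cut to avoid $v$ and $x$ as endpoints, so all three neighbours of $v$ lie on $v$'s side and all three neighbours of $x$ lie on $x$'s side. The main obstacle is to show that the simultaneous bad patterns in all three bricks of the triple --- constrained by the bijection on missing $z$-edges that is forced by $\sum_i\alpha_i=6$ and $\alpha_i\le 3$ --- are mutually incompatible: the closed walks of $H$ inside the triple thread the three $2$-factors together via the $z_j$ in a way which, combined with $d(v,x)\ge 3$ and the forbidden-endpoint condition, cannot be simultaneously realised. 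This step is the technical heart and is where the full strength of the hypothesis on $(v,e)$ is used.

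For cyclic $5$-connectivity of $\mathcal{T}=\mathcal{T}(G,v,e,M)$, let $C$ be an independent edge cut separating $V(\mathcal{T})$ into two parts each containing a cycle, and I would stratify by how $C$ uses the three layers of the construction: the inter-triple edges (where each edge of $M$ corresponds to two edges of $\mathcal{T}$), the $z$-edges inside a triple, and the intra-brick edges. A purely inter-triple cut has size at least $2\cdot 3=6$ by the $3$-edge-connectivity of $M$; a cut contained in a single brick corresponds to a cyclic cut of the associated copy of $G$ (after reinserting $v_i$ and $\{x_i,y_i\}$), hence has size at least $4$ in $G$, to which at least one brick-external edge of $\mathcal{T}$ must be added, giving $|C|\ge 5$. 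The main obstacle is the mixed regime in which $C$ straddles brick boundaries using a combination of intra-brick, $z$-, and inter-triple edges; here I would translate any cut of $\mathcal{T}$ of size at most $4$ into an independent cyclic edge cut of some copy of $G$ of size at most $4$ that either has $v$ or $x$ among its endpoints or places them on the same side, contradicting the $(v,e)$-hypothesis. The girth condition on $G$ (at least $5$, coming from the $(v,e)$-hypothesis) and the distance condition $d(v,x)\ge 3$ are exactly what rule out the smallest degenerate cuts at the brick boundary.
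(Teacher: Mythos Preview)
Your non-hamiltonicity argument has a genuine gap, and you are making it much harder than it needs to be. First, the enumeration is incomplete: $(\alpha_i,\beta_i)=(3,1)$ is admissible (take $(\alpha_1,\alpha_2,\alpha_3)=(3,3,0)$ with $(\beta_1,\beta_2,\beta_3)=(1,1,2)$), and a brick with pattern $(3,1)$ carries two paths with endpoint sets $\{x_i,w_{i,a}\}$ and $\{w_{i,b},w_{i,c}\}$, which is neither of your two ``good'' configurations nor your designated ``bad'' one. More importantly, even in your residual case you try to invoke the $(v,e)$-hypothesis on $4$-cuts, but that hypothesis plays no role whatsoever in the non-hamiltonicity; it is needed only for cyclic $5$-connectivity. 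What you are missing is a purely structural observation about the cycle $H$ itself: the two inter-triple edges of a brick $B$ go to the \emph{same} brick $B'$ in a neighbouring triple (each edge of $M$ is realised by a pair $x_k$--$y_\ell$, $x_\ell$--$y_k$). If $\sum_i\beta_i\ge 4$ then some brick $B$ has $\beta_B=2$; if the two $H$-segments in $B$ through $e_1$ and $e_2$ are distinct you get your ``good'' $\{x_i,w\},\{y_i,w'\}$ pattern and a hamiltonian cycle of $G$. Otherwise the segment in $B$ runs from $x_i$ to $y_i$, and since $B'$ shares the same pair $e_1,e_2$, the identical dichotomy applies in $B'$; in the remaining sub-case the $x$--$y$ segments in $B$ and $B'$ together with $e_1,e_2$ form a closed sub-cycle of $H$ that misses all $z$-vertices, contradicting hamiltonicity. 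This two-brick sub-cycle argument is the missing idea; with it, the ``residual situation'' you describe never arises and no appeal to $2$-factors or $4$-cuts is needed.

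For cyclic $5$-connectivity your plan is in the right spirit, but the paper's execution is more direct: rather than stratifying a hypothetical cyclic cut by edge types, one fixes an arbitrary set $C$ of four independent edges and shows it is not a cut at all. The key lemma is that any brick containing at most two edges of $C$ stays connected after their removal (this is where the $(v,e)$-hypothesis is actually used, by translating a disconnection of $B\setminus C$ into a forbidden cyclic cut of $G$ of size at most~$4$ meeting $v$ or $x$, or having them on the same side). Since at most one brick can hold three or more edges of $C$, a short case analysis on that single possibly-disconnected brick, together with the $3$-edge-connectivity of $M$, finishes the argument. Your ``single brick'' bound $|C|\ge 4+1$ is not quite how it works: one does not add an external edge to a $4$-cut of $G$, one instead shows that four edges are already too few to sever~$\mathcal{T}$.
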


\begin{proof}
Let $C=\{e_1,\dots ,e_4\}$ be a set of four independent edges in $\mathcal{T}(G,v,e,M)$.
We have to show that $C$ is not a cut of $\mathcal{T}(G,v,e,M)$.

\noindent\textit{Claim: } If a brick $B$ contains at most $2$ edges of $C$, then $B\setminus C$ is connected \\

\noindent\textit{Proof: }
Assume that this is not the case and let $C'\subset C$ denote a set of minimal size
so that $B\setminus C'$ is not connected.  This means that $B\setminus C'$ has two
components, say $P_1$ and $P_2$, each of which has at most two endpoints
of~$C'$.
Since every vertex of $B$ has degree as least two,
neither of $P_1$ and $P_2$ can be isolated vertices,
and the only vertices that might have degree 1 in $B\setminus C'$
are  $x,y,w_1,w_2,w_3$ (if they are also endpoints of edges in $C'$).

Since each component in $B\setminus C'$ is not an isolated vertex and has
at most two vertices of degree~1, it has a cycle unless it is a non-trivial path
within the vertices $x,y,w_1,w_2,w_3$.
As any two vertices of $x,y,w_1,w_2,w_3$
have distance at least~2 from each other ($\{x,y\}$ is removed when forming $B$,
$\{x,w_i\}$ or $\{x,w_i\}$ are not present as $d(v,x)\ge 3$, $d(v,y)\ge 3$
and an edge $\{w_i,w_j\}$ would be a triangle in $G$ contradicting that it is
cyclically 4-connected), this cannot be the case.
So both components contain a cycle.

One of $P_1,P_2$ would contain at most one of $w_1,w_2,w_3$;
w.l.o.g.\ $P_1$ contains only $w_1$ or none of these vertices. If $P_1$ contains none of
$x,y$ or both, then $C'\cup \{\{v,w_1\}\}$ (resp.\ $C'$) would be a cyclic edge-cut of
$G$ with at most $3$ edges, otherwise $C'\cup \{\{v,w_1\},\{x,y\}\}$ (resp.\ $C'\cup \{\{x,y\}\}$)
would be a cyclic edge-cut of $G$ with at most $4$ edges containing
the edge $\{x,y\}$. Both cases are in contradiction to the assumptions
on~$G$, so $B\setminus C$ is connected.

\medskip

As only one brick can contain more than two edges of $C$, for at most one brick $B$ we have that $B\setminus C$ 
is not connected. Assume first that there is such a $B\setminus C$ in a triple $T$.
Then there is at most one edge of $C$ not in $B$ and two bricks
in the same triple (different from $T$) belong to the same component of
$\mathcal{T}(G,v,e,M)\setminus C$. Due to $M$ being 3-connected
all triples different from $T$ belong to the same component. The two bricks in
$T$ different from $B$ are connected to bricks in other triples, so they
belong to the same component. 
If $B$ contains all 4 edges of $C$, then each component contains a vertex of 
$x,y,w_1,w_2,w_3$, as otherwise $C$ would be a cut in $G$ not separating $v$
and $x$. As $x,y,w_1,w_2,w_3$ are connected to the other triples, in this case
the graph is connected. If $B$ contains only 3 edges of~$C$, each component
contains at least two vertices of $x,y,w_1,w_2,w_3$ (otherwise we had a 
4\nobreakdash-cut in $G$ containing $x$ or $v$) and the result follows analogously.

In $B\setminus C$ each vertex is either in a component with
all of $w_1,w_2,w_3$---at least two of which have a path to another brick in
the triple---or with both of $x,y$ and at least one of them is connected to another 
triple. So in this case $\mathcal{T}(G,v,e,M)\setminus C$ is connected.

Assume now that all bricks are connected after removing $C$. If no triple contains three or more edges
of $C$, then all triples are connected and as $M$ is 3-edge connected (so with
the edges doubled 6-edge connected), all triples are in the same component of
$\mathcal{T}(G,v,e,M)\setminus C$.

The last case is that a triple $T$ contains three or more edges
of $C$ and is disconnected, which means that the edges are adjacent to the three
new vertices, which are nevertheless still connected to at least two bricks in the
triple. The other triples are still connected, and as each brick in $T$ is connected
with at least one edge to a brick in another triple, the bricks of $T$ also
belong to the same component.
This completes the proof of cyclic 5-connectivity. 

\smallskip

Next we prove the non-hamiltonicity.
Assume that $\mathcal{T}(G,v,e,M)$ has a hamiltonian cycle $H$. 
Each edge-cut must contain a positive even number of edges of $H$,
so the 6\nobreakdash-edge-cut isolating a triple from the rest of the graph
contains $2,4$ or $6$ edges of $H$. If it contains exactly $2$ edges,
one of the bricks has no edges going to another triple,
so it must have $2$ edges going to the vertices in the triple
not belonging to a brick---but this implies a hamiltonian cycle
in $G$.

So the edge-cut contains at least $4$ edges of $H$ and there is a brick
$B$ where both edges $e_1,e_2$ going to another triple belong to $H$.
After $H$ enters $B$ at $e_1$, it must first leave at $e_2$; otherwise
there is another segment of $H$ that enters $B$ at $e_2$ and these
two segments together would allow us to construct a hamiltonian cycle in
the non-hamiltonian graph~$G$.  The same argument applies to the
other brick which $e_1$ is incident with, giving us a cycle made of
segments of $H$ involving only two bricks, in contradiction to $H$
being a hamiltonian cycle.
\end{proof}

\begin{corollary}
There are infinitely many non-hamiltonian cyclically 5-connected  
bipartite cubic graphs. The smallest one has at most 300 vertices.
\end{corollary}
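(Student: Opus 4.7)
The plan is to deduce the corollary directly from Theorem~\ref{thm:cyc5nonham}. I will take $G$ to be the Georges--Kelmans graph together with the vertex-edge pair $(v,e)$ marked by the asterisks in Figure~\ref{fig:EHGK}; the paragraph preceding the theorem already observes that this pair satisfies the required distance and cyclic $4$-edge-cut hypotheses. Since $G$ is cyclically $4$-connected, non-hamiltonian and bipartite, the theorem then guarantees that for any $3$-edge-connected cubic multigraph $M$, the graph $\mathcal{T}(G,v,e,M)$ is cyclically $5$-connected and non-hamiltonian.

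For the explicit $300$-vertex bound, I will take $M$ to be the cubic multigraph on two vertices joined by three parallel edges, which is trivially $3$-edge-connected. Counting vertices: removing $v$ from a copy of $G$ and deleting the edge $\{x,y\}$ leaves a brick on $49$ vertices; a triple consists of three bricks together with the new vertices $z_1,z_2,z_3$ and hence has $3\cdot 49 + 3 = 150$ vertices; the two triples corresponding to the two vertices of $M$ give $300$ vertices in total.

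To produce infinitely many examples, I will let $M$ range over an infinite family of $3$-edge-connected bipartite cubic (multi)graphs of strictly increasing order, for instance the $2$-vertex theta multigraph used above, $K_{3,3}$, and larger bipartite cubic graphs. Since $\mathcal{T}(G,v,e,M)$ has $150\,|V(M)|$ vertices, distinct orders of $M$ yield graphs of distinct orders, producing infinitely many examples.

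The only delicate step will be verifying that $\mathcal{T}(G,v,e,M)$ can be made bipartite, in line with the colouring flexibility illustrated by the coloured vertices in Figure~\ref{fig:cyc5}. Since $G$ is bipartite, $v$ and every $w_{i,j}$ lie in opposite colour classes, so assigning each $z_j$ the colour of $v$ makes every edge $\{z_j,w_{i,j}\}$ proper. The vertices $x$ and $y$ have opposite colours since they are adjacent in $G$; one then pairs up the loose brick ends so that $x$-type ends are always glued to $y$-type ends, which is possible precisely when $M$ itself is bipartite (as in the cases above). This bookkeeping with colour classes is the main obstacle, but it requires no substantive new argument beyond Theorem~\ref{thm:cyc5nonham}.
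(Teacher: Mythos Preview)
Your argument follows the paper's approach exactly: apply Theorem~\ref{thm:cyc5nonham} to the Georges--Kelmans graph with the marked pair, take $M$ to be the $2$-vertex cubic multigraph for the $300$-vertex bound, and vary $M$ for infinitely many examples. The vertex count $3\cdot 49 + 3 = 150$ per triple and $300$ in total is correct.

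There is, however, one inaccurate claim in your bipartiteness discussion. You assert that gluing $x$-type ends to $y$-type ends ``is possible precisely when $M$ itself is bipartite''. This is not so: the construction as described in the paper already always glues $x_k$ to $y_\ell$ (and hence $y_k$ to $x_\ell$), and this preserves bipartiteness for \emph{every} $3$-edge-connected cubic multigraph $M$. Indeed, once one fixes the $2$-colouring on one triple, the $z_j$--$w_{i,j}$ edges force all three bricks of that triple to carry the same orientation of the colouring of $G$; choosing the \emph{same} orientation on every triple then makes every cross-edge $\{x_k,y_\ell\}$ join opposite colours, regardless of the structure of $M$. The paper itself illustrates the construction for $M=K_4$, which is not bipartite, with a valid $2$-colouring in Figure~\ref{fig:cyc5}. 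Your restriction to bipartite $M$ is therefore unnecessary, though it does no harm to the validity of your proof since the specific $M$'s you use happen to work.
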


\begin{proof}
This is a direct consequence of Theorem~\ref{thm:cyc5nonham}
and the fact that the construction can be carried out to preserve bipartiteness.
The
lower bound is obtained by applying the construction to the Georges--Kelmans
graph and the cubic multigraph on $2$ vertices.
\end{proof}

Our construction raises the question of whether even higher cyclic
connectivity can be achieved in a non-hamiltonian bipartite cubic graph.
We are not able to answer this question and propose it as a research topic.



\section{Barnette's conjecture and the girth}
\label{sect:barnette_genus}

A famous variation on the problem at hand is
Barnette's conjecture~\cite{barnette1969conjecture},
which states that every 3-connected \textit{planar} bipartite cubic graph is hamiltonian.
Here our main result is the following.

\begin{theorem}\label{Barnette-bound}
Let $G$ be a 3-connected planar bipartite cubic graph with $n$ vertices.
Then
\begin{itemize}
   \item[(a)] $n\le 90$ implies that $G$ is hamiltonian;
   \item[(b)] $n\le 78$ implies that every edge of $G$ lies on a hamiltonian cycle;
   \item[(c)] $n\le 66$ implies that for any two edges $e_1,e_2$ of $G$, there
     is a hamiltonian cycle through $e_1$ but avoiding $e_2$.
\end{itemize}
\end{theorem}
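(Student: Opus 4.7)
The plan is to prove (a), (b), and (c) simultaneously by a coupled induction on $n$, with the stronger properties at smaller sizes driving the weaker ones at larger sizes. The engine is a local reduction $G \mapsto G'$ that takes a 3-connected planar bipartite cubic graph $G$ on $n$ vertices, equipped with a suitable configuration, to a strictly smaller 3-connected planar bipartite cubic graph $G'$, together with a lifting lemma: certain hamiltonian cycles of $G'$ extend to hamiltonian cycles of $G$. Lifting typically requires the cycle in $G'$ to meet prescribed usage/avoidance constraints on the new edges introduced by the reduction, so stronger hamiltonicity at smaller $n$ is needed; this is why (a), (b), (c) must be established together, and the staggered thresholds $90$, $78$, $66$ (differing by $12$) reflect how many vertices each application of the reduction removes and how many extra layers of edge prescription each lift demands.

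For the reduction, a natural candidate is a local replacement around a short-face configuration. Every bipartite cubic planar graph has, by Euler's formula, at least eight 4-faces, and more generally a large supply of short faces, so a carefully chosen configuration of moderate size will appear in almost every graph and the set of \emph{irreducible} graphs should be very small. The lifting lemma would enumerate the possible patterns in which a hamiltonian cycle $H'$ of $G'$ traverses the newly added edges and, case by case, rebuild a hamiltonian cycle $H$ of $G$. Whenever the direct lift fails, the induction hypothesis at a smaller size---specifically (b) or (c)---supplies an alternative $H'$ with the required edge pattern, thereby closing the induction.

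The computational step then consists of generating all irreducible 3-connected planar bipartite cubic graphs up to the relevant sizes, using a modified version of \emph{plantri} that prunes descendants as soon as a reducible configuration is created, and testing each with \emph{cubhamg} for the relevant property: bare hamiltonicity for (a), a hamiltonian cycle through each given edge for (b), and the two-edge version for (c). In line with the verification policy stated in Section~\ref{sect:properties}, every such computation would be replicated by an independent implementation, ideally using an alternative algorithm.

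The main obstacle is calibrating the reduction. Too restrictive a choice leaves too many irreducible graphs and the enumeration at $n=90$ becomes infeasible; too permissive a choice produces reduced graphs whose hamiltonian cycles cannot always be lifted, forcing stronger hamiltonian properties at small $n$ than we actually have. Designing a reduction broad enough to make the irreducible enumeration tractable at $n=90$ for (a), while keeping the lifting lemma provable using only the edge-prescription strength of (b) at $n\le 78$ and (c) at $n\le 66$, is the genuinely delicate part, and it is precisely what determines the $12$-vertex gap between successive thresholds.
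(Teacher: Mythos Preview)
Your high-level strategy---a local reduction around a short-face configuration together with a lifting lemma, bootstrapping (a) and (b) from (c)---is the right mechanism, and the $12$-vertex gaps do come from exactly this kind of reduction. But as written this is a plan, not a proof: you never specify the configuration, never state or prove the lifting lemma, and you explicitly flag the calibration of the reduction as ``the genuinely delicate part'' without carrying it out. That is the gap.

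The paper's proof is considerably more economical because the reduction you are trying to design already exists: it is Theorem~5 of Holton--Manvel--McKay~\cite{holton1985hamiltonian}, which shows precisely how property~(c) at $n$ vertices implies (b) at $n+12$ and (a) at $n+24$, via a reduction that contracts a $4$-face adjacent to two other $4$-faces. Consequently the paper does \emph{not} enumerate irreducible graphs up to $90$ vertices; it directly verifies~(c) for \emph{all} $3$-connected planar bipartite cubic graphs with $n\le 66$ using \emph{plantri} and \emph{cubhamg} (about $37$ CPU years), additionally checks~(c) on $68$ and $70$ vertices for graphs lacking the reducible configuration (another $5$ CPU years), and then simply invokes the cited theorem to obtain~(a) and~(b). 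Your proposal to generate irreducibles at $n=90$ is unnecessary once the existing reduction is used, and would in any case require you to first pin down the reduction you left open.
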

\begin{proof}
  Using the generator \textit{plantri}~\cite{brinkmann07} and the program {\em cubhamg}, 
we established part (c) by direct computation. This required approximately 37 CPU years.
  We also found that the same property holds for those 3-connected planar bipartite
  cubic graphs on 68 or 70 vertices that do not have a 4-face adjacent to two other 4-faces. This required another 5 CPU years.
  Parts (a) and (b) now follow as in \cite[Theorem 5]{holton1985hamiltonian}.
\end{proof}


%
%
%
%
%

Even assuming that Barnette's conjecture is true, it is natural to ask how close to planarity
non-hamiltonian 3-connected bipartite cubic graphs can be; 
in particular, what is the minimum genus of a non-hamiltonian 
3-connected bipartite cubic graph?

Using the program {\em multi\_genus}~\cite{multigenus}
to determine the genus, we found that the Georges--Kelmans graph
has genus~5. The Ellingham--Horton graphs on 54 and 78 vertices have
genus~4 and~7, respectively.
The graphs of Horton with 92 and 96 vertices have
genus 8 or 9, and genus between 8 and~10, respectively.

During our project we compiled a collection of small non-hamiltonian
3-connected bipartite cubic graphs by a mixture of unsystematic searches.
For 50--64 vertices, the number of graphs in our collection is
1, 4, 30, 187, 1334, 3377, 29529, 204069, respectively.
(The non-hamiltonicity of these graphs was confirmed with a separate program.)
The smallest genus that occurs in the collection is~4, and 
the smallest graph found with genus~4 has 52 vertices
(see Figure~\ref{fig:52v_genus4}).

\begin{figure}[h!tb]
	\centering
	\includegraphics[width=0.45\textwidth]{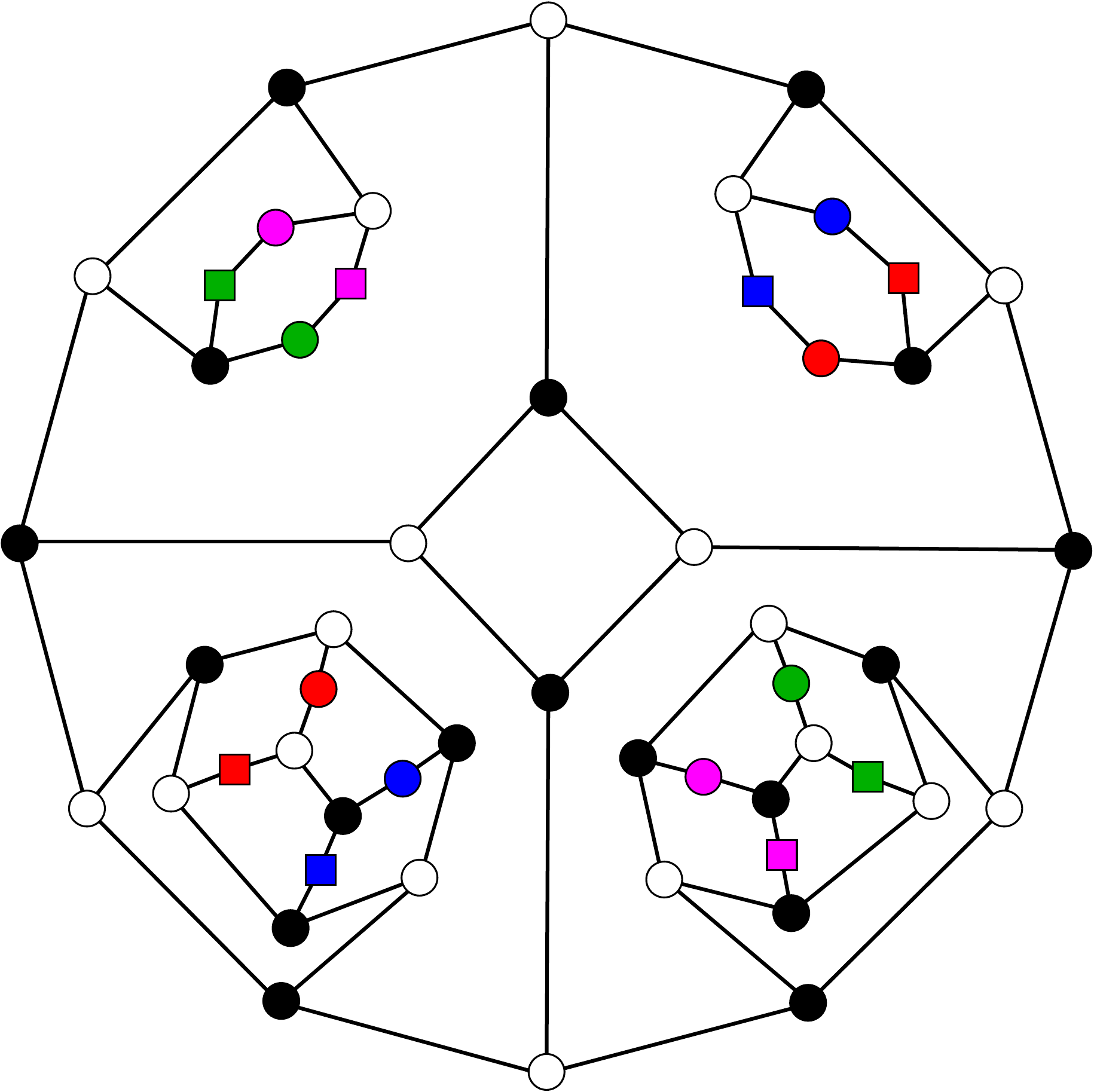}
	\caption{A non-hamiltonian 3-connected bipartite cubic graph of genus 4 on 52 vertices.
	Join two vertices of degree 2 if they have the same colour and shape. The two
	edges for each colour can be drawn on one handle.}
	\label{fig:52v_genus4}
\end{figure}




Note that the condition of being 3-connected is essential for Barnette's conjecture.
Requiring only connectivity but not 3-connectivity, there is a non-hamiltonian
planar bipartite cubic graph already on 26 vertices.
For genus 1 and 2, the smallest  bipartite cubic non-hamiltonian graphs
have 24 and 20 vertices, respectively.



\section{A fast practical algorithm for hamiltonian cycles}
\label{sect:cubhamg}

As mentioned, this project required the discovery of more than $10^{14}$ hamiltonian
cycles, in cubic graphs of up to 50 vertices.
A naive search that grows a path one edge at a time has no chance of achieving
this feat in an acceptable amount of time.
A much faster approach, available as the program {\em cubhamg} (which is part of the {\em nauty} package~\cite{nauty-website}), has been used
in several investigations since~\cite{holton1988smallest} but never published.
We now give a brief description here.

At each point of time, every edge has a label NO (not in the hamiltonian cycle),
YES (in the hamiltonian cycle), or UNDEC (undecided).  
Every edge is initialized to UNDEC, except for edges we wish to force into
or out of the hamiltonian cycle.

Given an edge labelling, a {\em propagation} process can be performed
by applying the following rules until no further rules can be applied or
a \emph{failure condition} occurs.

\begin{enumerate}
\itemsep=0pt
\item[(a)] If a vertex has two incident NOs or three incident YESes,
  a failure condition occurs.
\item[(b)] If a vertex has one incident NO, then the other two incident
  edges will be labelled YES.
\item[(c)] If a vertex has two incident YESes, then the other incident
  edge will be labelled NO.
 \item[(d)] Let $P$ be a maximal path of edges labelled YES.
   \begin{enumerate}
    \itemsep=0pt
    \item[(i)] Suppose $P$ is a hamiltonian path. If its ends are adjacent,
       then a hamiltonian cycle has been found;
       otherwise, a failure condition occurs.
    \item[(ii)] If $P$ is not a hamiltonian path and its ends are adjacent,
       then the edge between the ends will be labelled NO.
    \item[(iii)] If there are distinct vertices $x,y$ not in $P$, such that 
       the neighbours of $x$ are $y$ and the two ends of $P$, then 
       the edge $\{x,y\}$ will be labelled YES.
   \end{enumerate}
\end{enumerate}
The overall structure is a backtrack search, with three branches according
to which edges incident to an arbitrarily chosen vertex are labelled NO.
If propagation ends with a hamiltonian cycle, we are done.  If it finishes without
either a hamiltonian cycle or a failure condition,
there must be a vertex with one incident edge labelled
YES and the other two labelled UNDEC (otherwise (a), (b) or (c) could be applied).
Now the search branches into two cases depending on which of the two
UNDEC edges is to be labelled NO.
If propagation finishes with a failure condition, we backtrack to the nearest
branching point where there is an unexplored branch. 

The efficiency of this process is high because propagation usually gives
new labels to many edges.  Also, propagation can be carried out very
quickly.  Each vertex $v$ has an attribute $a_v$ whose meaning is
``if this vertex is an end of a maximal path of edges labelled YES, then
the other end is $a_v$''.  With this simple data structure, each of the
propagation operations can be carried out in constant time.

Note that the algorithm is not a heuristic for finding cycles, but
a complete search that
certifies the absence of hamiltonian cycles as well as their presence.
When a hamiltonian cycle is found, checking it is trivial, but it is also worth
pointing out that our proofs would still be valid if an error sometimes
caused a hamiltonian graph to be misidentified as non-hamiltonian.
As an example, if Table~\ref{tab:cyc4} contained a few graphs that didn't
belong, it would only mean that we have more pairs $G_1,G_2$ to join.


\section{The graphs mentioned in this paper}\label{s:hogtied}

For the reader's convenience, in Table~\ref{tab:HogIds}
we give the identification by which the graphs mentioned in
this paper can be examined and downloaded at the {\em House of Graphs}~\cite{HoG}.

%

\begin{table}[h!]
\centering
\begin{tabular}{c|c|c}
  order & description of non-hamiltonian graph& HoG id\\
  \hline
  
  20 & smallest connected bipartite cubic graph & \href{https://hog.grinvin.org/ViewGraphInfo.action?id=6923}{6923} \\
  24 & ditto, of genus 1 & \href{https://hog.grinvin.org/ViewGraphInfo.action?id=34282}{34282} \\
  26 & ditto, planar & \href{https://hog.grinvin.org/ViewGraphInfo.action?id=34286}{34286} \\
  38 & Barnette--Bosak--Lederberg graph & \href{https://hog.grinvin.org/ViewGraphInfo.action?id=954}{954} \\
  50 & Georges--Kelmans graph & \href{https://hog.grinvin.org/ViewGraphInfo.action?id=1096}{1096} \\
  52 & 3-connected bipartite cubic graph of genus 4 & \href{https://hog.grinvin.org/ViewGraphInfo.action?id=33805}{33805} \\
  54 & Ellingham--Horton graph & \href{https://hog.grinvin.org/ViewGraphInfo.action?id=1059}{1059} \\
  78 & Ellingham--Horton graph & \href{https://hog.grinvin.org/ViewGraphInfo.action?id=1061}{1061} \\
  92 & Horton graph & \href{https://hog.grinvin.org/ViewGraphInfo.action?id=1179}{1179} \\
  96 & Horton graph & \href{https://hog.grinvin.org/ViewGraphInfo.action?id=1181}{1181}  
\end{tabular}
\caption{{\em House of Graphs} id numbers for graphs mentioned in this paper.}
\label{tab:HogIds}
\end{table}
\noindent An archive of the software used for this project is available at~\cite{Programs}.


\section*{Acknowledgements}
Most of the extensive computational results were obtained from the
Stevin Supercomputer Infrastructure provided by Ghent
University, the Hercules Foundation and the Flemish Government, department EWI.



\bibliographystyle{plain}
\bibliography{references.bib}

\end{document}